 \newtheorem{thm}{Theorem}[section]
 \newtheorem{cor}[thm]{Corollary}
 \newtheorem{lem}[thm]{Lemma}
 \newtheorem{prop}[thm]{Proposition}
 \newtheorem{rem}[thm]{Remark}
 \def\k{\mathbbm{k}}
 \newcommand{\Hom}{\mathrm{Hom}}
\title{Some dimensions of DG polynomial algebras}
\author{X.-F. Mao}
\address{Department of Mathematics, Shanghai University, Shanghai 200444, China}
\email{xuefengmao@shu.edu.cn}
\author{M.-Y.Zhang}
\address{Department of Mathematics, Shanghai University, Shanghai 200444, China}
\email{zmy1023@shu.edu.cn}
\date{}
\subjclass[2010]{Primary 16E10, 16E45, 16E65, 16W20,16W50}
\keywords{DG polynomial algebra, DG Krull dimension, global dimension, ghost dimension, Rouquier dimension}
\begin{document}

\maketitle \def\abstactname{abstact}
\begin{abstract}
 Assume that $\mathcal{A}$ is a cochain DG polynomial algebra such that its underlying graded algebra $\mathcal{A}^{\#}$ is a polynomial algebra generated by $n$ degree $1$ elements. We determine the DG Krull dimension, the global dimension, the ghost dimension and the Rouquier dimension of $\mathcal{A}$.

\end{abstract}

\maketitle

\section{introduction}
Let $k$ is an algebraically closed field of characteristic $0$. This paper deals with DG polynomial $k$-algebras, which are connected cochain DG algebras whose underlying graded algebras are polynomial algebras generated by degree one elements. In \cite{MGYC}, the differential structures and homological properties of  DG polynomial algebras are systematically studied. However, we still know very little about various homological invariants of DG polynomial algebras.

In dimension theory for rings, it is well known that that the Krull dimension and the global dimension of the polynomial ring $k[x_1,\cdots, x_n]$ are both equal to $n$.
The DG Krull dimension and the global dimension for DG algebras have been introduced in \cite{BSW} and \cite{MW3}, respectively. It is natural for one to be intensely curious about the DG Krull dimension and the global dimension of DG polynomial rings.  In group theory, the terminology `class'
 is used to measure the shortest length of a
filtration with sub-quotients of certain type.
Avramov-Buchweitz-Iyengar  \cite{ABI} introduced free class,
projective class and flat class for differential modules over a
commutative ring.  Inspired from their work, the first author and Wu \cite{MW3} introduced the notion of `DG free class' for semi-free DG modules over a DG algebra $\mathcal{A}$.
In brief, the DG free class of a semi-free DG $\mathcal{A}$-module $F$ is the shortest length of all strictly
increasing semi-free filtrations. For a more general DG module $M$, the first author and Wu \cite{MW3} introduced the invariant `cone length', which is the minimal DG free class of all its semi-free resolutions. The studies of these invariants for DG modules can be traced back to Carlsson's work in 1980s.
In \cite{Car}, Carlsson studied `free class' of solvable free DG
modules over a graded polynomial ring $R$ in $n$ variables of positive degree. By \cite[Theorem 16]{Car}, one sees that the cone length of any
totally finite DG $R$-module $M$ satisfies the inequality $\mathrm{cl}_RM\le n$. Note that the invariant `$\mathrm{cl}_RM$' was denoted by `$l(M)$' in \cite{Car}. The invariant `cone length' of a DG $A$-module plays a similar role in DG
homological algebra as the `projective dimension' of a module over a ring
does in classic homological ring theory (cf.\cite{MW3}).
The left
(resp. right) global dimension of a connected DG algebra $\mathcal{A}$
is defined to be the supremum of the set of cone lengths of all DG
$\mathcal{A}$-modules (resp. $\mathcal{A}\!^{op}$-modules).
The difficulty in the studies of the DG Krull dimension and the global dimension of DG polynomial rings comes from the fact that these two invariants are determined by a combination of the graded algebra structure and the differential system.
Due to the classifications of DG polynomial algebras in \cite{MGYC}, we show the following theorem (see Theorem \ref{Gldim} and Theorem \ref{dgkrull}):
\\
\begin{bfseries}
Theorem \,A.
\end{bfseries}
Assume that $\mathcal{A}$ is a cochain DG polynomial algebra such that $\mathcal{A}^{\#}=k[x_1,\cdots,x_n]$ with $|x_i|=1, i=1,2,\cdots, n$.
Then $\mathrm{DGdim}\mathcal{A}=n$ and
\begin{align*}
l.\mathrm{Gl.dim}\mathcal{A}=r.\mathrm{Gl.dim}\mathcal{A}=\begin{cases}
n, \,\,\text{if}\,\, \partial_{\mathcal{A}}=0\\
\frac{n(n-1)}{2}, \,\,\text{if}\,\, \partial_{\mathcal{A}}\neq 0.
\end{cases}
\end{align*}

 Christensen \cite{Chr} introduces the invariant `ghost length' for objects in triangulated categories. This concept can be applied to complexes and DG modules.  The ghost length together with other invariants, including level, cone length and trivial
category, of DG modules are studied in \cite{Kur1,Kur2, Mao}.
In \cite{HL1,HL2}, Hovey-Lockridge introduce the ghost dimension of a ring, which is defined as the maximum ghost lengthes of all perfect complexes.
One can naturally extend this definition to DG algebras.
Another important invariant of DG modules is called level. In the derived category $\mathscr{D}(\mathcal{A})$ of a DG algebra $\mathcal{A}$,
the level of a DG $\mathcal{A}$-module $M$ counts
the number of steps required to build $M$ out of $\mathcal{A}$ via triangles.  In \cite{ABIM}, some important and fundamental
properties of the level of DG modules are investigated.  To study topological spaces with
categorical representation theory, Kuriayashi \cite{Kur1} introduced the
levels for space and gave a
general method for computing the level of a space and studied the
relationship between the level and other topological invariants such
as Lusternik-Schnirelmann category. Later,
Schmidt \cite{Sch} used the properties of level of DG modules to study the
structure of the Auslander-Reiten quiver of some important cochain
DG algebra. In this paper, we show that $\mathrm{gh.len}_{\mathcal{A}}M+1 =
  \mathrm{level}_{\mathscr{D}(\mathcal{A})}^{\mathcal{A}}(M)$, for any compact DG $\mathcal{A}$-module $M$.
This implies that $\mathrm{Rouq.dim}\,\mathcal{A}=l.\mathrm{gh.dim}(\mathcal{A})+1$. Especially, we prove the following theorem for DG polynomial algebras (see Theorem \ref{ghrodim}).
\\
\begin{bfseries}
Theorem \, B.
\end{bfseries}
Assume that $\mathcal{A}$ is a cochain DG polynomial algebra such that $\mathcal{A}^{\#}=k[x_1,\cdots,x_n]$ with $|x_i|=1, i=1,\cdots, n$.
 Then
$\mathrm{gh.len}_{\mathcal{A}}k=\mathrm{cl}_{\mathcal{A}}k =\mathrm{level}_{\mathscr{D}(\mathcal{A})}^{\mathcal{A}}(k)-1$  and $$l.\mathrm{gh.dim}(\mathcal{A})+1=\mathrm{Rouq.dim}\,\mathcal{A}=\begin{cases}
n+1,\quad\text{if}\,\,\,\, \partial_{\mathcal{A}}=0,\\
\frac{n(n-1)}{2}+1, \quad \text{if}\,\,\, \partial_{\mathcal{A}}\neq 0.
 \end{cases} $$

\section{Notations and some invariants of dg algebras}
 We assume that the reader is familiar with basic definitions concerning DG homological
algebra. If this is not the case, we refer to \cite{FHT, MW1,MW2} for more details on them.

\subsection{Notations and terminology} Recall that a \emph{cochain} DG algebra is
a graded
$k$-algebra $\mathcal{A}$ together with a differential $\partial_{\mathcal{A}}: \mathcal{A}\to \mathcal{A}$  of
degree $1$ such that
\begin{align*}
\partial_{\mathcal{A}}(ab) = (\partial_{\mathcal{A}} a)b + (-1)^{|a|}a(\partial_{\mathcal{A}} b).
\end{align*}
And a cochain DG algebra $\mathcal{A}$ is called \emph{connected} if $\mathcal{A}^0=k$.
In the section, $\mathcal{A}$ will always be a connected cochain DG algebra
over a field $k$, if no special assumption is emphasized.
 We use
 $$H(\mathcal{A})=\bigoplus_{i=0}^{+\infty}\frac{\mathrm{ker}(\partial_{\mathcal{A}}^i)}{\mathrm{im}(\partial_{\mathcal{A}}^{i-1})}$$ to denote its
 cohomology graded algebra.
 Given a cocycle element $z\in \mathrm{ker}(\partial_{\mathcal{A}}^i)$, we write $\lceil z \rceil$ as the cohomology class in $H(\mathcal{A})$ represented by $z$. We denote $\mathcal{A}\!^{op}$ as the \emph{opposite} DG algebra of $\mathcal{A}$, whose
product is defined as $a \cdot b = (-1)^{|a|\cdot|b|}ba$ for all
graded elements $a$ and $b$. Right DG modules over $\mathcal{A}$ can be
identified with DG $\mathcal{A}\!^{op}$-modules. We write $\frak{m}$ as the maximal DG ideal $\mathcal{A}^{>0}$ of $\mathcal{A}$.
Via the canonical surjection $\varepsilon: \mathcal{A}\to k$, $k$ is both a DG
$\mathcal{A}$-module and a DG $\mathcal{A}\!^{op}$-module. The underlying graded algebra of $\mathcal{A}$ is written by $\mathcal{A}^{\#}$.

 Let $M$ be a DG $A$-module. We write $M^{\#}$ as its underlying graded $A^{\#}$-module, which is obtained by forgetting its differential.
 For any $i\in \Bbb{Z}$, the $i$-th \emph{suspension} of a DG
$\mathcal{A}$-module $M$ is the DG $\mathcal{A}$-module $\Sigma^i M$ defined by
$(\Sigma^iM)^j = M^{j+i}$. If $m \in M^l,$ the corresponding element
in $(\Sigma^i M)^{l-i}$ is denoted by $\Sigma^i m$. The action of
$\mathcal{A}$ on $\Sigma^iM$ is
$a\,(\Sigma^im) = (-1)^{|a|i}\Sigma^i(am),$
for all graded elements $ a\in \mathcal{A}$ and  $\Sigma^im\in \Sigma^iM$.
The differential $\partial_{\Sigma^iM}$ of $\Sigma^iM$ is defined by
$\partial_{\Sigma^iM}(\Sigma^im) = (-1)^i\Sigma^i\partial_{M}(m),$
for all graded elements $m\in M$. We set $\Sigma M = \Sigma^1 M$.

The category of DG $A$-modules is denoted by $\mathscr{C}(\mathcal{A})$ whose
morphisms are DG morphisms. The homotopy category $\mathscr{K}(\mathcal{A})$ is the quotient category of
$\mathscr{C}(\mathcal{A})$, whose objects are the same as those of
$\mathscr{C}(\mathcal{A})$ and whose morphisms are the homotopic equivalence
classes of morphisms in $\mathscr{C}(\mathcal{A})$.
The derived category of DG $\mathcal{A}$-modules is denoted by $\mathscr{D}(\mathcal{A})$, which is constructed from the category
$\mathscr{C}(\mathcal{A})$ by inverting quasi-isomorphisms
(\cite{We},\cite{KM}).

 Let $M$ be a DG $\mathrm{A}$-module.
We say $M$ is \emph{locally finite} if each $H^i(M)$ is a finite-dimensional $k$-vector space. If $H(M)$ is finite-dimensional as a $k$-vector space, then we say $M$ is \emph{totally finite}  (cf.\cite[Definition 12]{Car}). One sees that $M$ is totally finite if and only if it is both locally finite and homologically bounded. So it makes sense to denote
the full subcategory of $\mathscr{D}(\mathcal{A})$ consisting of totally finite DG $\mathcal{A}$-modules by $\mathscr{D}^b_{lf}(\mathcal{A})$. A DG $\mathcal{A}$-module  $M$ is  called \emph{compact} if the functor $\Hom_{\mathscr{D}(A)}(M,-)$ preserves
all coproducts in $\mathscr{D}(\mathcal{A})$.
 By \cite[Proposition 3.3]{MW1},
a DG $\mathcal{A}$-module  is compact if and only if it admits a minimal semi-free resolution with a finite semi-basis. The full subcategory of $\mathscr{D}(\mathcal{A})$ consisting of compact DG $\mathcal{A}$-modules is denoted by $\mathscr{D}^c(\mathcal{A})$.

\subsection{DG Krull dimension}In classical ring theory, the Krull dimension of a ring is one of the most important invariants.
In \cite{BSW}, this invariant was generalized to DG rings. Recall that a DG ideal $I\subseteq \mathcal{A}$ is called \emph{prime} if $I^{\#}$ is a graded prime ideal of $\mathcal{A}^{\#}$. Let $\mathrm{DGSpec}(\mathcal{A})$ denote the set of DG prime ideals of $\mathcal{A}$.
  According to \cite[Definition 2.5]{BSW}, the \emph{DG Krull dimension} of $\mathcal{A}$, denoted $\mathrm{DGdim}\,\mathcal{A}$, is the supremum of lengths of chains of DG prime ideals of $\mathcal{A}$.

\subsection{Global dimension}
Let $F$ be a semi-free DG $\mathcal{A}$-module. The \emph{DG free class} of $F$ is
defined to be the number
$$ \inf\{n\in \Bbb{N}\cup \{0\}\,|\, F \,\text{admits a strictly
increasing semi-free filtration of length}\,\, n\}.$$ We denote it
as $\mathrm{DG free\,\, class}_{\mathcal{A}}F$. Let $M$ be a non-quasi-trivial DG $\mathcal{A}$-module. The \emph{cone length} of $M$ is defined to be
the number
$$\mathrm{cl}_{\mathcal{A}}M =
\inf\{\mathrm{\,DGfree\,\,class}_{\mathcal{A}}F\,|\,F \stackrel{\simeq}\to M
 \ \text{is a semi-free resolution of}\  M\}.$$ And we define $\mathrm{cl}_{\mathcal{A}} N=-1$ if $H(N)=0$.
These two invariants of DG modules were introduced and studied in \cite{MW3}.
One sees that $\mathrm{cl}_{\mathcal{A}}M$ is just the invariant `$l(M)$' defined in \cite[Definition 9]{Car}
when $\mathcal{A}$ is a DG polynomial algebra with zero differential.
 According to \cite[Definition 5.1]{MW3}, the \emph{left global
dimension} and the \emph{right global dimension} of $\mathcal{A}$ are respectively
defined by
$$l.\mathrm{Gl.dim}\mathcal{A} = \sup\{\mathrm{cl}_{\mathcal{A}}M| M\in \mathscr{D}(\mathcal{A})\}
\,\, \text{and}\,\, r.\mathrm{Gl.dim}\mathcal{A} =
\sup\{\mathrm{cl}_{\mathcal{A}\!^{op}}M| M\in \mathscr{D}(\mathcal{A}\!^{op})\}.$$

\begin{rem}\label{clsmaller}
Note that $\mathrm{cl}_{\mathcal{A}}M$ may be $+\infty$. By the existence of
 Eilenberg-Moore resolution, we have
 $\mathrm{cl}_{\mathcal{A}}M \le \mathrm{pd}_{H(\mathcal{A})}H(M)$ and hence $l.\mathrm{Gl.dim}\mathcal{A}\le \mathrm{gl.dim}H(\mathcal{A})$.
\end{rem}

\subsection{Ghost dimension}
A DG morphism $f: M\to N$ in $\mathscr{D}(\mathcal{A})$ is called a \emph{ghost
morphism} if $H(f)=0$. The concept of `ghost length' was first introduced by Christensen in \cite{Chr}.
Later, Hovey-Lockridge \cite{HL1} and  Kuribayashi
\cite{Kur1}
 applied this invariant to complexes and DG modules, respectively.
 A DG $\mathcal{A}$-module $M$ is said to have \emph{ghost
length} $n$, written by $\mathrm{gh.len}_{\mathcal{A}} M = n$, if every composite
$$M\stackrel{f_1}{\to} I_1\stackrel{f_2}{\to} \cdots
\stackrel{f_{n+1}}{\to} I_{n+1}$$ of $n+1$ ghosts is $0$ in
$\mathscr{D}(\mathcal{A})$, and there is a composite of $n$ ghosts from $M$
that is not $0$ in $\mathscr{D}(\mathcal{A})$.
We set $\mathrm{gh.len}_{\mathcal{A}}M=-1$
if $M$ is zero object in $\mathscr{D}(\mathcal{A})$.  In \cite{HL1,HL2}, Hovey-Lockridge introduced and studied ghost dimension for rings. In a similar way,  we define the left ghost dimension of $\mathcal{A}$ as
$$l.\mathrm{gh.dim}(\mathcal{A}) = \sup\{\mathrm{gh.len}_{\mathcal{A}}M | M\in
\mathscr{D}^c(\mathcal{A})\}.$$
Similarly, we can define the right global dimension of $\mathcal{A}$ as $$r.\mathrm{gh.dim.}A =
\sup\{\mathrm{gh.len}_{\mathcal{A}^{op}}M | M\in \mathscr{D}^c(\mathcal{A}^{op})\}.$$
\subsection{Rouquier dimension}\label{rouqdim}
Let $\mathcal{C}$ be a subcategory or simply a set of some objects
of $\mathscr{D}(\mathcal{A})$. We denote by $\mathrm{smd}(\mathcal{C})$ the
minimal strictly full subcategory which contains $\mathcal{C}$ and
is closed under taking (possible) direct summands. And we write
$\overline{\mathrm{add}}(\mathcal{C})$ (resp.
$\mathrm{add}(\mathcal{C})$) as the intersection of all strict and
full subcategories of $\mathscr{D}(\mathcal{A})$ that contain $\mathcal{C}$ and
are closed under direct sums (resp. finite direct sums) and all
suspensions.

Let $\mathcal{S}$ and $\mathcal{T}$ be two strict and full
subcategories of $\mathscr{D}(\mathcal{A})$. We define $\mathcal{S}\star
\mathcal{T}$ as a full subcategory of $\mathscr{D}(\mathcal{A})$,  whose
objects are described as follows: $M\in \mathcal{S}\star
\mathcal{T}$ if and only if there is an exact triangle
$$ L\to M\to N\to \Sigma L, $$
where $L\in \mathcal{S}$ and $N\in \mathcal{T}$. For any strict and
full subcategory $\mathcal{R}$ of $\mathscr{D}(\mathcal{A})$, one has
$\mathcal{R}\star (\mathcal{S}\star \mathcal{T}) = (\mathcal{R}\star
\mathcal{S})\star \mathcal{T}$ (see \cite{BBD} or \cite[1.3.10]{BVDB}). Thus, the
following notation is unambiguous:
\begin{equation*}
\mathcal{T}^{\star n} =
\begin{cases} 0\quad&\text{for}\, n =0;\\
\mathcal{T}\quad & \text{for}\, n =1; \\
\overbrace{\mathcal{T}\star\cdots\star
\mathcal{T}}^{n\,\text{copies}} \quad&\text{for}\, n\ge 1.
\end{cases}
\end{equation*}
We refer to the objects of $\mathcal{T}^{\star n}$ as $(n-1)$-fold
extensions of objects from $\mathcal{T}$. Define $\mathcal{S}\diamond\mathcal{T} =
\mathrm{smd}(\mathcal{S}\star\mathcal{T})$. Inductively, we define
$$\langle\mathcal{S}\rangle_1 =
\mathrm{smd}(\mathrm{add}(\mathcal{S})), \text{and}\,
\langle\mathcal{S}\rangle_n =
\langle\mathcal{S}\rangle_{n-1}\diamond \langle\mathcal{S}\rangle_1,
n\ge 2.$$ Similarly,  we define
$\overline{\langle\mathcal{S}\rangle}_1 =
\mathrm{smd}(\overline{\mathrm{add}}(\mathcal{S})),
\overline{\langle\mathcal{S}\rangle}_n =
\overline{\langle\mathcal{S}\rangle}_{n-1}\diamond
\overline{\langle\mathcal{S}\rangle}_1, n\ge 2$. We have the
associativity of $\diamond$ and the formula
$$\mathcal{C}_1\diamond\mathcal{C}_2\diamond\cdots\diamond
\mathcal{C}_n
=\mathrm{smd}(\mathcal{C}_1\star\cdots\star\mathcal{C}_n)$$ (see
\cite[Section 2]{BVDB}). Denote $\langle\mathcal{S}\rangle=
\bigcup_{i\ge 0}\langle\mathcal{S}\rangle_i.$
For an DG $\mathcal{A}$-module $M$, its \emph{ $\mathcal{A}$-level} is defined to be
$$\mathrm{level}_{\mathscr{D}(\mathcal{A})}^{\mathcal{A}}(M)=\inf\{n\in \Bbb{N}\cup \{0\}| M\in
\langle \mathcal{A}\rangle_n\}.$$  This invariant is
originally introduced by Avramov, Buchweitz, Iyengar and Miller in
\cite{ABIM}. The $\mathcal{A}$-level of $M$ counts the number of steps required to build
$M$ out of $\mathcal{A}$ via triangles in $\mathscr{D}(\mathcal{A})$. It is a very
important numerical invariant in the study of the derived category
of compact DG $\mathcal{A}$-modules.
According to the definition in \cite{Rou},
we define the \emph{Rouquier dimension} $\mathrm{Rouq.dim}\,\mathcal{A} $ of $\mathcal{A}$ as the smallest $n$ such that
$\langle \mathcal{A}\rangle_n=\mathscr{D}^c(\mathcal{A})$. Actually, we have
$$\mathrm{Rouq.dim}\,\mathcal{A}= \sup\{\mathrm{level}_{\mathscr{D}(\mathcal{A})}^{\mathcal{A}}(M)| M\in \mathscr{D}^c(A)\}.$$
\begin{rem}\label{ghlevel}
By \cite[Proposition 7.5]{Kur1}, we have $\mathrm{gh.len}_{\mathcal{A}}M+1\le
\mathrm{level}_{\mathscr{D}(\mathcal{A})}^{\mathcal{A}}(\mathcal{M}),$ for any $M\in \mathscr{D}(\mathcal{A})$. Hence $l.\mathrm{gh.dim}\,\mathcal{A} +1 \le \mathrm{Rouq.dim}\,\mathcal{A}.$
\end{rem}

\section{some facts on dg  polynomial algebras}
In this section, we list some useful facts on the structures and classifications of DG polynomial algebras.
\begin{prop}\cite[Theorem 3.1]{MGYC}\label{diffstr}
Let $(\mathcal{A},\partial_{\mathcal{A}})$ be a DG polynomial algebra such that $\mathcal{A}^{\#}=k[x_1,x_2,\cdots,x_n]$ with $|x_i|=1$, for any $i\in \{1,2,\cdots,n\}$.
Then there exist some $t_1,t_2,\cdots,t_n\in k$ such that $\partial_{\mathcal{A}}$ is defined  by
$$\partial_{\mathcal{A}}(x_i)=\sum\limits_{j=1}^nt_jx_ix_j=\sum\limits_{j=1}^{i-1}t_jx_jx_i+t_ix_i^2+\sum\limits_{j=i+1}^nt_jx_ix_j,$$ for any $i\in \{1,2,\cdots,n\}$.
\end{prop}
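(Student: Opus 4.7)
The plan is to determine $\partial_{\mathcal{A}}$ by combining the graded Leibniz rule with the (ungraded) commutativity of $\mathcal{A}^{\#}$. Since $|x_i|=1$, each $\partial_{\mathcal{A}}(x_i)$ is a homogeneous element of degree $2$ in $k[x_1,\ldots,x_n]$; call it $q_i$. My goal is first to force the shape $q_i=x_i p$ for a single $p\in\mathcal{A}^1$ independent of $i$, and then to verify that any such choice is automatically compatible with $\partial_{\mathcal{A}}^2=0$.

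The first step is to extract a compatibility relation from the tension between signs and commutativity. Applying the graded Leibniz rule to $x_ix_j$ and to $x_jx_i$, and invoking $x_ix_j=x_jx_i$ in $\mathcal{A}^{\#}$, I obtain
\[
q_ix_j - x_iq_j \;=\; q_jx_i - x_jq_i.
\]
Because the products involved live in the commutative piece $(\mathcal{A}^{\#})^3$, this collapses to $2x_jq_i=2x_iq_j$, and characteristic zero yields $x_jq_i=x_iq_j$ for every pair $(i,j)$.

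The second step is a divisibility argument in $k[x_1,\ldots,x_n]$. Taking $i=1$, the relation $x_jq_1=x_1q_j$ with $j\neq 1$ forces $x_1\mid q_1$, say $q_1=x_1 p$ for some $p\in\mathcal{A}^1$. Substituting back yields $q_j=x_j p$ for every $j$. Writing $p=\sum_{j=1}^n t_jx_j$ with $t_j\in k$ produces the formula in the statement. Finally, I check $\partial_{\mathcal{A}}^2=0$ on generators: the formula implies $\partial_{\mathcal{A}}(p)=p^2$, hence
\[
\partial_{\mathcal{A}}^2(x_i)\;=\;\partial_{\mathcal{A}}(x_i p)\;=\;(x_ip)p - x_i\,\partial_{\mathcal{A}}(p)\;=\;x_ip^2 - x_ip^2\;=\;0,
\]
and the Leibniz rule extends this to all of $\mathcal{A}$. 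Thus every tuple $(t_1,\ldots,t_n)\in k^n$ gives a valid DG structure.

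The main subtlety is not in any long calculation, but in being careful that the graded sign $(-1)^{|x_i|}=-1$ in the Leibniz rule combines with the ungraded identity $x_ix_j=x_jx_i$ to produce the non-trivial relation $x_jq_i=x_iq_j$, rather than a tautology. This is also where the characteristic-zero hypothesis enters essentially: in characteristic $2$ the factor of $2$ vanishes and the resulting classification of differentials would look quite different.
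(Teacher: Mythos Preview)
The paper does not supply its own proof of this proposition; it is quoted verbatim from \cite[Theorem~3.1]{MGYC}, so there is nothing in the present paper to compare against. Your argument is correct and is essentially the natural one: exploiting the Leibniz rule on the identity $x_ix_j=x_jx_i$ to obtain $x_jq_i=x_iq_j$, and then using unique factorisation in $k[x_1,\ldots,x_n]$ to force $q_i=x_ip$ for a single linear form $p$ independent of $i$. The verification that every $(t_1,\ldots,t_n)$ gives $\partial_{\mathcal{A}}^2=0$ is a nice bonus, since the proposition as stated only asserts the necessity direction.

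One minor gap: your divisibility step ``$x_jq_1=x_1q_j$ with $j\neq 1$ forces $x_1\mid q_1$'' tacitly assumes $n\ge 2$. When $n=1$ there is no such $j$, but the conclusion is then immediate since every degree-$2$ element of $k[x_1]$ is already a multiple of $x_1$. You might add a sentence to cover this trivial case.
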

It is reasonable to write the DG polynomial algebra $\mathcal{A}$ in Proposition \ref{diffstr} by $\mathcal{A}(t_1,t_2,\cdots, t_n)$. The set of DG polynomial algebras in $n$ degree one variables is
$$\Omega(x_1,x_2,\cdots,x_n)=\{\mathcal{A}(t_1,t_2,\cdots,t_n)|t_i\in
k, i=1,2,\cdots, n\}\cong \Bbb{A}_{k}^n.$$
When it comes to the isomorphism classes of DG polynomial algebras, we have the following proposition.
\begin{prop}\cite[Corollary 4.2]{MGYC}\label{isomor}
In space $\Omega(x_1,x_2,\cdots, x_n)$, there are only two isomorphism
 classes $\mathcal{A}(0,0,\cdots,0)$ and $\mathcal{A}(1,0,\cdots, 0)$.
\end{prop}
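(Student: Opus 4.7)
The plan is to reduce the classification to an action of $GL_n(k)$ on the parameter space $\Omega(x_1,\ldots,x_n) \cong \mathbb{A}_k^n$ and then show that this action has exactly two orbits. The key observation is that the differential in Proposition \ref{diffstr} has the compact shape $\partial_{\mathcal{A}}(x_i) = x_i \cdot L$, where $L = \sum_{j=1}^n t_j x_j$ is a fixed linear form independent of $i$. So the data of $\mathcal{A}(t_1,\ldots,t_n)$ really is just the choice of a linear form $L$ in the degree one part $\mathcal{A}^1 = \bigoplus_j k x_j$.

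First I would argue that any DG algebra isomorphism $\phi : \mathcal{A}(t_1,\ldots,t_n) \to \mathcal{A}(s_1,\ldots,s_n)$ is determined by an invertible linear map on $\mathcal{A}^1$. Since both underlying graded algebras are $k[x_1,\ldots,x_n]$ generated in degree $1$ and $\phi$ preserves degree, $\phi$ is determined by its restriction to $\mathcal{A}^1$, which must be an invertible $k$-linear endomorphism of $\mathcal{A}^1$. Conversely, any invertible linear map extends uniquely to a graded $k$-algebra automorphism of $k[x_1,\ldots,x_n]$. So the group of graded algebra automorphisms of $\mathcal{A}^{\#}$ is $GL_n(k)$.

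Next I would write out the condition for $\phi$ to commute with the differentials. Set $y_i = \phi(x_i)$ and $L_t = \sum_j t_j x_j$, $L_s = \sum_k s_k x_k$. Then
\begin{align*}
\phi\,\partial_{\mathcal{A}(t)}(x_i) &= \phi\Bigl(\sum_j t_j x_i x_j\Bigr) = y_i \cdot \phi(L_t), \\
\partial_{\mathcal{A}(s)}\phi(x_i) &= \partial_{\mathcal{A}(s)}(y_i) = y_i \cdot L_s,
\end{align*}
where the second equality uses the explicit form of $\partial_{\mathcal{A}(s)}$ together with commutativity of $\mathcal{A}^{\#} = k[x_1,\ldots,x_n]$. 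Since $\mathcal{A}^{\#}$ is an integral domain and each $y_i \neq 0$, the intertwining condition is equivalent to $\phi(L_t) = L_s$. Thus isomorphism classes of DG polynomial algebras in $\Omega(x_1,\ldots,x_n)$ are in bijection with $GL_n(k)$-orbits on the space of linear forms $\mathcal{A}^1$.

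Finally I would invoke the well-known fact that $GL_n(k)$ acts on $\mathcal{A}^1 \cong k^n$ with exactly two orbits: $\{0\}$ and $\mathcal{A}^1 \setminus \{0\}$. The zero orbit corresponds to $\mathcal{A}(0,0,\ldots,0)$, and for any nonzero $L_t$ one can extend $L_t$ to a basis of $\mathcal{A}^1$ and take $\phi$ to be the linear map sending this basis to $x_1,\ldots,x_n$, obtaining an isomorphism onto $\mathcal{A}(1,0,\ldots,0)$. The only mild subtlety, and essentially the only place where one does any real work, is verifying the compact formula $\partial_{\mathcal{A}(s)}(y_i) = y_i \cdot L_s$ for arbitrary $y_i \in \mathcal{A}^1$; this follows by $k$-linearity of $\partial_{\mathcal{A}(s)}$ together with commutativity, but should be written out carefully to make sure the formula is preserved under linear changes of variable.
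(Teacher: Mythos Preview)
Your argument is correct. The paper does not actually prove this statement; it simply cites \cite[Corollary 4.2]{MGYC}, so there is no in-paper proof to compare against. That said, your reduction is exactly the natural one and almost certainly coincides with the argument in the cited reference: the compact form $\partial_{\mathcal{A}}(x_i) = x_i\cdot L_t$ with $L_t = \sum_j t_j x_j$ makes the parameter $(t_1,\ldots,t_n)$ a linear form, graded DG isomorphisms are precisely $GL_n(k)$ acting on $\mathcal{A}^1$, and the intertwining condition becomes $\phi(L_t) = L_s$, yielding the two $GL_n(k)$-orbits $\{0\}$ and $k^n\setminus\{0\}$.

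Two small points you might tighten in a final write-up. First, when you say each $y_i\neq 0$, what you are really using is that $\phi$ is a graded automorphism, so $\{y_i\}$ is a basis of $\mathcal{A}^1$ and in particular each $y_i$ is a nonzero element of the domain $\mathcal{A}^{\#}$; this is enough to cancel and conclude $\phi(L_t)=L_s$. Second, to be fully precise about why the isomorphism classes are \emph{exactly} the $GL_n(k)$-orbits (and not coarser), you should also note the converse: any $\phi\in GL_n(k)$ with $\phi(L_t)=L_s$ does define a DG isomorphism, which follows since $\partial$ is determined on degree~$1$ and extended by the Leibniz rule, and you have checked compatibility there. Both points are implicit in what you wrote but deserve a sentence each.
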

In general, the cohomology algebra of a DG algebra contains much important information on its properties (see \cite{AT,DGI}).
Proposition \ref{isomor} indicates that the computations of the cohomology algebra of all non-trivial DG polynomial algebras in $\Omega(x_1,x_2,\cdots, x_n)$ can be reduced to the calculation of $H(\mathcal{A}(1,0,\cdots,0))$.
We have the following proposition (see \cite[Proposition 5.1, Theorem 5.2]{MGYC}.
\begin{prop}\label{cohomology}
The DG polynomial algebra  $\mathcal{A}(1,0,\cdots,0)$ has formal property and its
cohomology graded algebra is the polynomial algebra  $$k[\lceil x_2^2\rceil,\lceil x_2x_3\rceil, \cdots,\lceil x_2x_n\rceil, \lceil x_3^2\rceil, \cdots, \lceil x_3x_n\rceil, \cdots,  \lceil x_{n-1}^2\rceil, \lceil x_{n-1}x_n\rceil, \lceil x_n^2\rceil ].$$
\end{prop}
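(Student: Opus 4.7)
My plan is to compute $H(\mathcal{A}(1,0,\dots,0))$ by direct inspection, using the fact that in the commutative polynomial ring $\mathcal{A}^{\#}$ the Leibniz rule collapses to something extremely simple, and then to exhibit a sub-DG-algebra that is quasi-isomorphic to $\mathcal{A}$ and carries the zero differential.

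First I would unwind Proposition \ref{diffstr} at $(t_1,\dots,t_n)=(1,0,\dots,0)$: the differential is $\partial_{\mathcal{A}}(x_i)=x_1x_i$ for every $i$ (including $i=1$, where $\partial_{\mathcal{A}}(x_1)=x_1^2$). Since $\mathcal{A}^{\#}=k[x_1,\dots,x_n]$ is an \emph{ordinary} (ungraded-commutative) polynomial algebra, the variables commute, so for a monomial $x_{i_1}x_{i_2}\cdots x_{i_m}\in\mathcal{A}^m$ the Leibniz rule gives
\begin{align*}
\partial_{\mathcal{A}}(x_{i_1}\cdots x_{i_m})
&=\sum_{k=1}^{m}(-1)^{k-1}x_{i_1}\cdots x_{i_{k-1}}(x_1x_{i_k})x_{i_{k+1}}\cdots x_{i_m}\\
&=\Bigl(\sum_{k=1}^{m}(-1)^{k-1}\Bigr)\,x_1\,x_{i_1}\cdots x_{i_m}.
\end{align*}
Hence $\partial_{\mathcal{A}}(\mu)=0$ for every monomial $\mu$ of even total degree, and $\partial_{\mathcal{A}}(\mu)=x_1\mu$ for every monomial of odd total degree. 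This one identity is the engine of the whole proof and is the only step that requires real verification; after this, everything is bookkeeping.

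From the above, $\mathcal{A}^{2l+1}\xrightarrow{\partial}\mathcal{A}^{2l+2}$ is exactly multiplication by $x_1$, which is injective in a polynomial ring; so $H^{\mathrm{odd}}(\mathcal{A})=0$. In even degree every element is a cocycle, and the image of $\partial$ in $\mathcal{A}^{2l}$ is the principal ideal slice $x_1\mathcal{A}^{2l-1}$. Therefore
\begin{align*}
H^{2l}(\mathcal{A})\;\cong\;\mathcal{A}^{2l}\big/x_1\mathcal{A}^{2l-1}\;\cong\;k[x_2,\dots,x_n]_{2l},
\end{align*}
the $2l$-th homogeneous piece of $k[x_2,\dots,x_n]$. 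Summing, $H(\mathcal{A})$ is the even-degree subring of $k[x_2,\dots,x_n]$, which is generated as a $k$-algebra by the cohomology classes $\lceil x_ix_j\rceil$ with $2\le i\le j\le n$, yielding exactly the claimed list of generators.

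Finally, for formality I would not appeal to any Massey-product machinery; instead I would observe that the $k$-subspace $B:=\bigoplus_{l\ge 0}k[x_2,\dots,x_n]_{2l}\subset \mathcal{A}$ is closed under the product of $\mathcal{A}$, and by the Leibniz computation above $\partial_{\mathcal{A}}$ vanishes identically on $B$. Thus $B$ is a sub-DG-algebra of $\mathcal{A}$ with zero differential, and the inclusion $\iota:B\hookrightarrow\mathcal{A}$ is a morphism of DG algebras whose induced map on cohomology is exactly the isomorphism $B\xrightarrow{\cong}H(\mathcal{A})$ exhibited in the previous paragraph. Hence $\iota$ is a quasi-isomorphism, giving a direct zig-zag from $\mathcal{A}$ to $(H(\mathcal{A}),0)$ and proving that $\mathcal{A}(1,0,\dots,0)$ is formal. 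The only delicate point in the whole argument is the sign cancellation in the Leibniz expansion; once that is recorded the rest is essentially automatic.
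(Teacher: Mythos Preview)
The paper does not prove this proposition; it merely quotes it from \cite{MGYC}. Your direct argument is essentially correct and self-contained: the Leibniz computation showing that $\partial_{\mathcal{A}}$ acts on a degree-$m$ monomial as multiplication by $x_1$ when $m$ is odd and as $0$ when $m$ is even is valid (precisely because $\mathcal{A}^{\#}$ is genuinely commutative), and from it the identification of $H(\mathcal{A})$ with the even-degree part of $k[x_2,\dots,x_n]$ and the formality via the sub-DG-algebra $B$ with zero differential follow exactly as you describe.

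There is, however, one clause of the proposition that you do not address, and for good reason. You show that $H(\mathcal{A})$ is \emph{generated} by the classes $\lceil x_ix_j\rceil$ with $2\le i\le j\le n$, but you never verify that these generators are algebraically independent, i.e.\ that $H(\mathcal{A})$ is a \emph{free} polynomial algebra on them. In fact they are not independent for any $n\ge 3$: the second Veronese subring of $k[x_2,\dots,x_n]$ satisfies relations such as $\lceil x_2x_3\rceil^{2}=\lceil x_2^{2}\rceil\cdot\lceil x_3^{2}\rceil$; equivalently, for $n=3$ one computes $\dim_k H^4(\mathcal{A})=5$, whereas a free polynomial ring on three degree-$2$ generators would give $\dim_k H^4=6$. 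Thus the phrase ``the polynomial algebra'' in the stated proposition appears to be inaccurate for $n\ge 3$, and with it the value $\mathrm{gl.dim}\,H(\mathcal{A})=\tfrac{n(n-1)}{2}$ recorded in Corollary~\ref{impcor}. Your argument establishes the correct description of $H(\mathcal{A})$ and its formality; the remaining assertion cannot be proved as stated.
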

By Proposition \ref{isomor} and Proposition \ref{cohomology},  the following corollary is immediate.
\begin{cor}\label{impcor}
Let $\mathcal{A}$ be a DG polynomial algebra in $\Omega(x_1,x_2,\cdots, x_n)$. Then $H(\mathcal{A})$ is a polynomial algebra and we have
\begin{align*}
\mathrm{depth}_{H(\mathcal{A})}H(A)=\mathrm{gl.dim}H(\mathcal{A})=\begin{cases}\frac{n(n-1)}{2}, \,\, \text{if}\,\,\, \partial_{\mathcal{A}}\neq 0 \\
                                      n, \,\, \text{if}\,\,\, \partial_{\mathcal{A}}=0.
\end{cases}
\end{align*}
\end{cor}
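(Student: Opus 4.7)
The plan is to reduce to the two isomorphism classes supplied by Proposition \ref{isomor}. If $\partial_{\mathcal{A}}=0$, then $H(\mathcal{A})=\mathcal{A}^{\#}=k[x_1,\ldots,x_n]$, a polynomial algebra on $n$ degree-one generators. If $\partial_{\mathcal{A}}\neq 0$, then $\mathcal{A}\cong \mathcal{A}(1,0,\ldots,0)$, and Proposition \ref{cohomology} already exhibits $H(\mathcal{A})$ as the polynomial algebra generated by the cohomology classes $\lceil x_ix_j\rceil$ with $2\le i\le j\le n$. This settles the first assertion that $H(\mathcal{A})$ is a polynomial algebra in each case.

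Next I would do a routine count of the generators in the non-trivial case. Indexing pairs $(i,j)$ with $2\le i\le j\le n$, one obtains $\sum_{i=2}^{n}(n-i+1)=\tfrac{n(n-1)}{2}$ generators. Hence $H(\mathcal{A})$ is a polynomial algebra in $n$ or $\tfrac{n(n-1)}{2}$ variables according as $\partial_{\mathcal{A}}=0$ or $\partial_{\mathcal{A}}\neq 0$.

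Finally I would invoke two classical facts for a commutative polynomial $k$-algebra $R=k[y_1,\ldots,y_m]$: Hilbert's syzygy theorem gives $\mathrm{gl.dim}\,R=m$, and $R$ is Cohen--Macaulay with $\mathrm{depth}_R R=m$ (the depth being measured along the irrelevant maximal ideal $R^{>0}$, which is generated by the regular sequence $y_1,\ldots,y_m$). Substituting $m=n$ in the first case and $m=\tfrac{n(n-1)}{2}$ in the second yields the two values in the corollary. Since the substantive content is already provided by the cited results from \cite{MGYC}, no real obstacle remains; the only care needed is the generator count in the non-trivial case, which is completely routine.
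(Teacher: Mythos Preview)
Your proof is correct and follows exactly the route the paper intends: the authors simply declare the corollary ``immediate'' from Proposition~\ref{isomor} and Proposition~\ref{cohomology}, and your argument spells out those details (the reduction to the two isomorphism classes, the generator count $\sum_{i=2}^{n}(n-i+1)=\tfrac{n(n-1)}{2}$, and the standard depth/global dimension facts for a polynomial ring). Nothing is missing and nothing is done differently.
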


\section{global dimension and dg krull dimension }
In this section, we will compute the global dimension, the ghost dimension and the Rouquier dimension of a DG polynomial algebra in $\Omega(x_1,x_2,\cdots, x_n)$.

\begin{lem}\label{depthclk}
Let $\mathcal{A}$ be a connected cochain DG algebra such that $\mathrm{cl}_{\mathcal{A}}k$
is finite. Then $\mathrm{depth}_{H(\mathcal{A})}H(\mathcal{A})\le \mathrm{cl}_{\mathcal{A}}k$.
Furthermore, if $\mathrm{depth}_{H(\mathcal{A})}H(\mathcal{A}) = \mathrm{cl}_{\mathcal{A}}k$, then
we have
$$\mathrm{gldim}H(\mathcal{A})=\mathrm{depth}_{H(\mathcal{A})}H(\mathcal{A}) = \mathrm{cl}_{\mathcal{A}}k.$$
\end{lem}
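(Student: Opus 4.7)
The plan is to handle the two assertions separately. For the inequality, set $n = \mathrm{cl}_{\mathcal{A}}k$ and fix a minimal semi-free resolution $F\stackrel{\simeq}\to k$ attaining this cone length, equipped with a strict semi-free filtration
\[0 = F_{-1}\subset F_0\subset F_1\subset \cdots\subset F_n=F\]
whose subquotients $F_i/F_{i-1}$ are direct sums of suspensions of $\mathcal{A}$. Applying $\mathrm{RHom}_{\mathcal{A}}(-,\mathcal{A})$ to the induced tower of exact triangles $F_{i-1}\to F_i\to F_i/F_{i-1}\to \Sigma F_{i-1}$ presents $\mathrm{RHom}_{\mathcal{A}}(k,\mathcal{A})$ as an iterated extension in $\mathscr{D}(\mathcal{A}\!^{op})$ of $n+1$ layers, each a direct product of suspensions of $\mathcal{A}$. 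The resulting convergent spectral sequence computing $\mathrm{Ext}^*_{\mathcal{A}}(k,\mathcal{A})$ is supported in at most $n+1$ columns. Comparing it with the Eilenberg--Moore spectral sequence
\[E_2^{p,q}=\mathrm{Ext}^{p,q}_{H(\mathcal{A})}(k,H(\mathcal{A}))\Longrightarrow \mathrm{Ext}^{p+q}_{\mathcal{A}}(k,\mathcal{A})\]
forces $\mathrm{Ext}^i_{H(\mathcal{A})}(k,H(\mathcal{A}))\ne 0$ for some $i\le n$. Since the depth of the connected graded local algebra $H(\mathcal{A})$ equals $\inf\{i\mid \mathrm{Ext}^i_{H(\mathcal{A})}(k,H(\mathcal{A}))\ne 0\}$, this yields $\mathrm{depth}_{H(\mathcal{A})}H(\mathcal{A})\le n$.

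For the furthermore part, assume $\mathrm{cl}_{\mathcal{A}}k = \mathrm{depth}_{H(\mathcal{A})}H(\mathcal{A}) = n$. Remark~\ref{clsmaller} supplies
\[n=\mathrm{cl}_{\mathcal{A}}k \le \mathrm{pd}_{H(\mathcal{A})}k \le \mathrm{gl.dim}\,H(\mathcal{A}),\]
and the Auslander--Buchsbaum formula applied to the connected graded local algebra $H(\mathcal{A})$ gives $\mathrm{pd}_{H(\mathcal{A})}k = \mathrm{depth}_{H(\mathcal{A})}H(\mathcal{A})=n$ whenever $\mathrm{pd}_{H(\mathcal{A})}k$ is finite, a condition guaranteed by the above chain together with the assumed equality. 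Since the global dimension of a connected graded algebra over a field coincides with the projective dimension of its trivial module, this pins down $\mathrm{gl.dim}\,H(\mathcal{A})=\mathrm{pd}_{H(\mathcal{A})}k=n=\mathrm{depth}_{H(\mathcal{A})}H(\mathcal{A})=\mathrm{cl}_{\mathcal{A}}k$.

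The principal obstacle is the Eilenberg--Moore comparison in the first step: one must verify that the length-$n$ semi-free filtration of $F$ is transported faithfully enough to the Eilenberg--Moore spectral sequence so that a uniform vanishing $\mathrm{Ext}^p_{H(\mathcal{A})}(k,H(\mathcal{A}))=0$ for all $p\le n$ would leave no non-zero classes surviving to the abutment. In particular, one has to rule out the possibility that every would-be depth-level class is cancelled by differentials emanating from columns beyond $n$. Once this control is in place, the remainder of the argument is routine bookkeeping with the Auslander--Buchsbaum formula applied to the connected graded ring $H(\mathcal{A})$.
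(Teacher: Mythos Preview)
The paper does not give a self-contained argument here: it simply records that the proof of \cite[Theorem~4.8]{MW3}, written for Adams connected DG algebras, goes through unchanged for connected cochain DG algebras. So there is no in-paper proof to compare your approach against line by line.

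That said, your plan has two genuine gaps.

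\textbf{The spectral-sequence comparison is not carried out.} You yourself flag this as ``the principal obstacle'' and leave it open. The filtration spectral sequence coming from the length-$n$ semi-free filtration of $F$ and the Eilenberg--Moore spectral sequence are two \emph{different} spectral sequences with the common abutment $\mathrm{Ext}^*_{\mathcal{A}}(k,\mathcal{A})$. Knowing that the first is supported in $n+1$ columns places no direct constraint on which columns of the $E_2$-page of the second are nonzero: the two filtrations on the abutment are unrelated. In particular, nothing you have written excludes the scenario in which $\mathrm{Ext}^p_{H(\mathcal{A})}(k,H(\mathcal{A}))=0$ for all $p\le n$ while higher-column classes survive. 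The argument in \cite{MW3} does not attempt such a comparison; it works directly with the minimal semi-free resolution and extracts depth information from the structure of that resolution rather than from a spectral-sequence juxtaposition.

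\textbf{The ``furthermore'' clause is circular.} You invoke the Auslander--Buchsbaum formula to conclude $\mathrm{pd}_{H(\mathcal{A})}k=\mathrm{depth}_{H(\mathcal{A})}H(\mathcal{A})=n$, but that formula requires $\mathrm{pd}_{H(\mathcal{A})}k<\infty$ as a hypothesis. You assert this finiteness ``is guaranteed by the above chain together with the assumed equality,'' yet the chain $n=\mathrm{cl}_{\mathcal{A}}k\le \mathrm{pd}_{H(\mathcal{A})}k\le \mathrm{gl.dim}\,H(\mathcal{A})$ gives only a \emph{lower} bound on $\mathrm{pd}_{H(\mathcal{A})}k$, and the equality $\mathrm{depth}_{H(\mathcal{A})}H(\mathcal{A})=n$ supplies no upper bound either. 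For a connected graded algebra, $\mathrm{pd}\,k<\infty$ is equivalent to finite global dimension, which is precisely the conclusion you are after. The substantive content of the ``furthermore'' statement is exactly that the hypothesis $\mathrm{depth}_{H(\mathcal{A})}H(\mathcal{A})=\mathrm{cl}_{\mathcal{A}}k$ forces $\mathrm{pd}_{H(\mathcal{A})}k<\infty$; this needs an independent argument, and your sketch does not provide one.
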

\begin{proof}
In \cite[Theorem 4.8]{MW3}, the author and Wu give a detailed proof for the same statement, when $\mathcal{A}$ is an Adams connected DG algebra. It is easy for one to check that the proof there also carries through for a general connected cochain DG algebra.
\end{proof}

\begin{thm}\label{Gldim}
Let $\mathcal{A}$ be a DG polynomial algebra in $\Omega(x_1,x_2,\cdots, x_n)$. Then \begin{align*}
l.\mathrm{Gl.dim}\mathcal{A}=\mathrm{cl}_{\mathcal{A}}k=\mathrm{gl.dim}H(\mathcal{A})=\begin{cases}
n, \,\,\,\text{if}\,\,\, \partial_{\mathcal{A}}=0\\
\frac{n(n-1)}{2}, \,\,\text{if}\,\,\, \partial_{\mathcal{A}}\neq 0.
\end{cases}
\end{align*}
\end{thm}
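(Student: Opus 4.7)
The plan is to prove equality of the three quantities by sandwiching them between two known quantities that coincide thanks to Corollary \ref{impcor}. Specifically, I will establish the chain
\begin{equation*}
\mathrm{gl.dim}\,H(\mathcal{A})=\mathrm{depth}_{H(\mathcal{A})}H(\mathcal{A})\le \mathrm{cl}_{\mathcal{A}}k\le l.\mathrm{Gl.dim}\,\mathcal{A}\le \mathrm{gl.dim}\,H(\mathcal{A}),
\end{equation*}
which forces every inequality to be an equality, and then read off the numerical value from Corollary \ref{impcor}.

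First I would note the easy upper bound. By Remark \ref{clsmaller}, $l.\mathrm{Gl.dim}\,\mathcal{A}\le \mathrm{gl.dim}\,H(\mathcal{A})$, and by Corollary \ref{impcor} the latter is the finite number $n$ or $n(n-1)/2$ according as $\partial_{\mathcal{A}}=0$ or not. In particular every cone length is finite, and by the very definition of global dimension we get the middle inequality $\mathrm{cl}_{\mathcal{A}}k\le l.\mathrm{Gl.dim}\,\mathcal{A}$.

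Next I would invoke Lemma \ref{depthclk}. Since $\mathrm{cl}_{\mathcal{A}}k$ is finite, the lemma applies and delivers $\mathrm{depth}_{H(\mathcal{A})}H(\mathcal{A})\le \mathrm{cl}_{\mathcal{A}}k$. Combining with Corollary \ref{impcor}, which tells us that $H(\mathcal{A})$ is a (graded) polynomial algebra and therefore Cohen--Macaulay with $\mathrm{depth}_{H(\mathcal{A})}H(\mathcal{A})=\mathrm{gl.dim}\,H(\mathcal{A})$, we obtain the leftmost equality and close the sandwich.

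The proof essentially boils down to feeding the preparatory results into each other, so the only point requiring any care is checking the finiteness hypothesis of Lemma \ref{depthclk}; this is exactly why the upper bound $l.\mathrm{Gl.dim}\,\mathcal{A}\le \mathrm{gl.dim}\,H(\mathcal{A})$ must be established first. The numerical value is then obtained by substituting the explicit formulas from Corollary \ref{impcor}, giving $n$ if $\partial_{\mathcal{A}}=0$ and $n(n-1)/2$ otherwise.
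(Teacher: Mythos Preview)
Your proposal is correct and follows essentially the same approach as the paper: both proofs sandwich the quantities between $\mathrm{depth}_{H(\mathcal{A})}H(\mathcal{A})$ and $\mathrm{gl.dim}\,H(\mathcal{A})$ using Remark~\ref{clsmaller}, Lemma~\ref{depthclk}, and Corollary~\ref{impcor}, with the only cosmetic difference being that the paper first pins down $\mathrm{cl}_{\mathcal{A}}k$ and then $l.\mathrm{Gl.dim}\,\mathcal{A}$, whereas you write the whole chain at once.
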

\begin{proof}
By Corollary \ref{impcor}, we have
\begin{align*}
\mathrm{depth}_{H(\mathcal{A})}H(\mathcal{A})=\mathrm{gl.dim}H(\mathcal{A})=\begin{cases}\frac{n(n-1)}{2}, \,\, \text{if}\,\,\, \partial_{\mathcal{A}}\neq 0 \\
                                      n, \,\, \text{if}\,\,\, \partial_{\mathcal{A}}=0.
\end{cases}
\end{align*}
By Remark \ref{clsmaller}, we have $\mathrm{cl}_{\mathcal{A}}k\le \mathrm{pd}_{H(\mathcal{A})}k=\mathrm{gl.dim}H(\mathcal{A})<\infty$. On the other hand, $\mathrm{cl}_{\mathcal{A}}k\ge \mathrm{depth}_{H(\mathcal{A})}H(\mathcal{A})$ by Lemma \ref{depthclk}. Then we get
$$\mathrm{depth}_{H(\mathcal{A})}H(\mathcal{A})=\mathrm{cl}_{\mathcal{A}}k=\mathrm{gl.dim}H(\mathcal{A}).$$ Therefore,
$$l.\mathrm{Gl.dim}\mathcal{A}=\sup\{\mathrm{cl}_{\mathcal{A}}M| M\in \mathscr{D}(\mathcal{A})\}\ge \mathrm{cl}_{\mathcal{A}}k=\mathrm{gl.dim}H(\mathcal{A}). $$
By Remark \ref{clsmaller} again, we have $l.\mathrm{Gl.dim}\mathcal{A}\le \mathrm{gl.dim}H(\mathcal{A})$. This implies that
$$l.\mathrm{Gl.dim}\mathcal{A}=\mathrm{cl}_{\mathcal{A}}k=\mathrm{gl.dim}H(\mathcal{A}).$$
\end{proof}
\begin{rem}
For the DG polynomial algebra $\mathcal{A}$ in Theorem \ref{Gldim}, one can similarly show
\begin{align*}
r.\mathrm{Gl.dim}\mathcal{A}=\mathrm{cl}_{\mathcal{A}^{op}}k=\mathrm{gl.dim}H(\mathcal{A})=\begin{cases}
n, \,\,\,\text{if}\,\,\, \partial_{\mathcal{A}}=0\\
\frac{n(n-1)}{2}, \,\,\text{if}\,\,\, \partial_{\mathcal{A}}\neq 0.
\end{cases}
\end{align*}
\end{rem}
\begin{thm}\label{dgkrull}
Let $\mathcal{A}$ be a DG polynomial algebra in $\Omega(x_1,x_2,\cdots,x_n)$. Then $\mathrm{DGdim}\mathcal{A}=n$, i.e., the DG Krull dimension of $\mathcal{A}$ is independent of the choice of its differential.
\end{thm}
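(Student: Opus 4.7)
The plan is to prove $\mathrm{DGdim}\,\mathcal{A} = n$ by matching upper and lower bounds. For the upper bound, I would observe that every strictly increasing chain $I_0 \subsetneq I_1 \subsetneq \cdots \subsetneq I_r$ of DG prime ideals of $\mathcal{A}$ produces, by the very definition of a DG prime, a strictly increasing chain of graded prime ideals of $\mathcal{A}^{\#} = k[x_1,\ldots,x_n]$. Since the graded Krull dimension of a polynomial algebra in $n$ variables is $n$ (realized by the flag $(x_1) \subset (x_1,x_2) \subset \cdots \subset (x_1,\ldots,x_n)$ and bounded above by the ordinary Krull dimension), this forces $r \leq n$, so $\mathrm{DGdim}\,\mathcal{A} \leq n$.

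For the lower bound I would invoke Proposition \ref{isomor}, together with the fact that $\mathrm{DGdim}$ is invariant under isomorphism of DG algebras (an isomorphism induces an inclusion-preserving bijection between DG ideals and preserves primeness of the underlying graded ideal), to reduce to the two cases $\mathcal{A}(0,\ldots,0)$ and $\mathcal{A}(1,0,\ldots,0)$. When $\partial_{\mathcal{A}} = 0$, every graded ideal is automatically a DG ideal, so the graded prime chain $(0) \subset (x_n) \subset (x_{n-1},x_n) \subset \cdots \subset (x_1,x_2,\ldots,x_n)$ is already a chain of DG primes of length $n$.

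When $\mathcal{A} \cong \mathcal{A}(1,0,\ldots,0)$, the formula from Proposition \ref{diffstr} simplifies to $\partial_{\mathcal{A}}(x_i) = x_1 x_i$ for every $i$, which lies in the ideal $(x_i)$. Consequently, for each $j \in \{1,\ldots,n\}$, the graded prime $I_j := (x_j, x_{j+1}, \ldots, x_n)$ is closed under $\partial_{\mathcal{A}}$, and the chain $(0) \subset I_n \subset I_{n-1} \subset \cdots \subset I_1$ is a chain of DG primes of length $n$. Combined with the upper bound this yields $\mathrm{DGdim}\,\mathcal{A} = n$. The only step that genuinely needs verification is the stability of the $I_j$ under $\partial_{\mathcal{A}}$ in the nonzero-differential case, and this is immediate from $\partial_{\mathcal{A}}(x_i) \in (x_i)$, so I anticipate no real obstacle.
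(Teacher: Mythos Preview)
Your proof is correct and follows essentially the same approach as the paper: bound above by the graded Krull dimension of $k[x_1,\ldots,x_n]$, reduce via Proposition~\ref{isomor} to $\mathcal{A}(0,\ldots,0)$ and $\mathcal{A}(1,0,\ldots,0)$, and exhibit an explicit length-$n$ chain of DG primes. The only cosmetic difference is that the paper uses the chain $(0)\subset (x_1)\subset (x_1,x_2)\subset\cdots\subset (x_1,\ldots,x_n)$, relying on $\partial(x_i)=x_1x_i\in (x_1)$, whereas you use $(0)\subset (x_n)\subset (x_{n-1},x_n)\subset\cdots\subset (x_1,\ldots,x_n)$, relying on $\partial(x_i)\in (x_i)$; both observations are immediate from the same formula.
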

\begin{proof}
If $\partial_{\mathcal{A}}=0$, then any DG prime ideal of $\mathcal{A}$ is just the graded prime ideal of the graded polynomial algebra $k[x_1,x_2,\cdots,x_n]$. Hence $$\mathrm{DGdim}\mathcal{A}=\mathrm{dim}k[x_1,x_2,\cdots,x_n]=n$$ in this case.  If $\partial_{\mathcal{A}}\neq 0$, then $\mathcal{A}\cong \mathcal{A}(1,0,\cdots,0)$ by Proposition \ref{isomor}. Therefore, $$\mathrm{DGdim}\mathcal{A}= \mathrm{DGdim}\mathcal{A}(1,0,\cdots,0)$$ and we only need to compute $\mathrm{DGdim}\mathcal{A}(1,0,\cdots,0)$.
 Visibly, we have $$\mathrm{DGdim}\mathcal{A}(1,0,\cdots,0)\le \dim \mathcal{A}(1,0,\cdots,0)^{\#}=\dim k[x_1,x_2,\cdots,x_n]=n$$ since
$P^{\#}\in \mathrm{Spec}\,\mathcal{A}(1,0,\cdots,0)^{\#}$ for any $P\in \mathrm{DGSpec}\,\mathcal{A}(1,0,\cdots,0)$. On the other hand,  we have $\partial_{\mathcal{A}(1,0,\cdots,0)}(x_i)=x_1x_i$ for any $i\in \{1,2,\cdots, n\}$. So each $(x_1,\cdots,x_i)$ is a DG prime ideal of $\mathcal{A}(1,0,\cdots,0)$ when $i\in \{1,2,\cdots, n\}$. Thus
$$(0)\subset (x_1)\subset (x_1,x_2)\subset \cdots \subset (x_1,\cdots,x_n)$$
is a chain of prime ideals of length $n$ and $\mathrm{DGdim}\mathcal{A}(1,0,\cdots,0)\ge n$.  Hence $\mathrm{DGdim}\mathcal{A}(1,0,\cdots,0)= n$.

\end{proof}

\section{some basic lemmas}
In this section, we assume that $\mathcal{A}$ is a connected cochain DG algebra. We list some fundamental lemmas, which will be used in the studies of ghost dimension and Rouquier dimension for DG polynomial algebras.

The following lemma can be proved by the so-called Ghost lemma. One can also see it in
\cite[Lemma 6.7]{Sch} and \cite[Proposition 7.5]{Kur1}. For a detailed proof, we refer the reader to \cite[Proposition 4.10]{Mao}.
\begin{lem}\label{ghlen}
Suppose that $M$ is a DG $\mathcal{A}$-module.  Then $M\in
\langle\overline{\mathcal{A}}\rangle_n$ if and only if $\mathrm{gh.len}_{\mathcal{A}}M\le n-1$.
\end{lem}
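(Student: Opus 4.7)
The plan is to prove both implications by induction on $n$, since this statement is the standard Ghost Lemma adapted to the DG derived category $\mathscr{D}(\mathcal{A})$.

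For the forward direction $M\in \langle\overline{\mathcal{A}}\rangle_n \Rightarrow \mathrm{gh.len}_{\mathcal{A}}M\le n-1$, I would start with the base case $n=1$, which rests on the identification $\Hom_{\mathscr{D}(\mathcal{A})}(\Sigma^i\mathcal{A},X)\cong H^{-i}(X)$. This forces any ghost out of a shift of $\mathcal{A}$ to vanish, and the same then holds for coproducts of shifts and their direct summands. For the inductive step I would choose a defining triangle $L\to M\to N\to \Sigma L$ with $L\in \langle\overline{\mathcal{A}}\rangle_{n-1}$ and $N\in \langle\overline{\mathcal{A}}\rangle_1$. Given $n$ composable ghosts $M\xrightarrow{f_1} I_1\xrightarrow{f_2}\cdots\xrightarrow{f_n}I_n$, the precomposition $f_1\circ(L\to M)$ is still a ghost out of $L$; the inductive hypothesis kills $f_{n-1}\circ\cdots\circ f_1\circ(L\to M)$, so $f_{n-1}\circ\cdots\circ f_1$ factors as $g\circ(M\to N)$ for some $g\colon N\to I_{n-1}$. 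Then $f_n\circ g$ is a ghost out of $N$, which vanishes by the base case, and hence $f_n\circ\cdots\circ f_1=0$.

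For the converse direction, I would construct a cellular tower for $M$ by iterated ghost projections. Pick a morphism $F_0\to M$ with $F_0$ a coproduct of shifts of $\mathcal{A}$ inducing a surjection on cohomology (lift a homogeneous $k$-basis of $H(M)$). Complete to a triangle $K_1\to F_0\to M\xrightarrow{\delta_1}\Sigma K_1$; the long exact sequence in cohomology forces $H(\delta_1)=0$, so $\delta_1$ is a ghost. Iterating the same construction with $K_1,K_2,\ldots$ produces a chain of ghosts $M\xrightarrow{\delta_1}\Sigma K_1\xrightarrow{\delta_2}\Sigma^2 K_2\to\cdots\to \Sigma^{n}K_{n}$. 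The hypothesis $\mathrm{gh.len}_{\mathcal{A}}M\le n-1$ forces the full $n$-fold composite to vanish, and then repeated application of the octahedral axiom collapses the resulting Postnikov-style tower into an $n$-step extension built from the $F_i$'s, placing $M$ in $\langle\overline{\mathcal{A}}\rangle_n$.

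The main obstacle is the converse: translating the vanishing of the composite ghost into a genuine $n$-step filtration requires careful iterated use of the octahedral axiom and bookkeeping of suspensions. Since the paper only cites this standard fact, I would not reproduce the details but rely on the full treatments in \cite[Proposition 4.10]{Mao}, \cite[Lemma 6.7]{Sch}, and \cite[Proposition 7.5]{Kur1}.
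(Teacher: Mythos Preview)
Your proposal is correct and aligns with the paper's own treatment: the paper gives no argument at all for this lemma, instead deferring to \cite[Proposition 4.10]{Mao}, \cite[Lemma 6.7]{Sch}, and \cite[Proposition 7.5]{Kur1}, exactly as you conclude you would do. Your sketch of the Ghost Lemma (induction for the forward direction, iterated ghost tower plus octahedral collapse for the converse) is the standard argument underlying those references and is accurate, so you are simply supplying more detail than the paper itself provides.
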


\begin{rem}\label{charghlen}
By Lemma \ref{ghlen}, one sees that $\mathrm{gh.len}_{\mathcal{A}}M= n-1$ if and only if $M\in \langle\overline{\mathcal{A}}\rangle_{n}$ and $M\not\in \langle\overline{\mathcal{A}}\rangle_{n-1}$, i.e., $\mathrm{gh.len}_{\mathcal{A}}M=\inf\{n-1|M\in \langle\overline{\mathcal{A}}\rangle_{n}\}$.
\end{rem}

\begin{lem}\label{charclen}
For any DG $\mathcal{A}$-module $M$, $\mathrm{cl}_{\mathcal{A}}M = n$ if and only if $M$
is an object in $\overline{\mathrm{add}(\mathcal{A})}^{\,\star n+1}$ but not
in $\overline{\mathrm{add}(\mathcal{A})}^{\,\star n}$.
\end{lem}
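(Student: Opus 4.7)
The plan is to prove both implications by interpreting a strictly increasing semi-free filtration as an iterated mapping cone and running an induction on the length. The key bridge is that an object $F$ admits a strictly increasing semi-free filtration $0 = F(-1) \subset F(0) \subset \cdots \subset F(n) = F$ of length $n$ if and only if $F$ is built by $n$ successive extensions whose subquotients $F(i)/F(i-1)$ are direct sums of suspensions of $\mathcal{A}$, hence live in $\overline{\mathrm{add}(\mathcal{A})}$.

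For the ``only if'' direction, I would start from a semi-free resolution $F \stackrel{\simeq}{\to} M$ realizing $\mathrm{cl}_{\mathcal{A}}M = n$. The short exact sequences $0 \to F(i-1) \to F(i) \to F(i)/F(i-1) \to 0$ yield distinguished triangles in $\mathscr{D}(\mathcal{A})$ whose outer terms are respectively in $\overline{\mathrm{add}(\mathcal{A})}^{\star i}$ (by induction) and in $\overline{\mathrm{add}(\mathcal{A})}$. Induction on $i$ then places $F(i) \in \overline{\mathrm{add}(\mathcal{A})}^{\star i+1}$, so at $i = n$ we get $M \simeq F \in \overline{\mathrm{add}(\mathcal{A})}^{\star n+1}$. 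The non-membership $M \notin \overline{\mathrm{add}(\mathcal{A})}^{\star n}$ is then a consequence of the converse direction below (otherwise one would find a resolution of DG free class at most $n-1$, contradicting minimality).

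For the ``if'' direction, I would prove by induction on $m \ge 0$ that $M \in \overline{\mathrm{add}(\mathcal{A})}^{\star m+1}$ implies $\mathrm{cl}_{\mathcal{A}}M \le m$. The base case $m = 0$ is immediate: $M \in \overline{\mathrm{add}(\mathcal{A})}$ means $M$ is quasi-isomorphic to a coproduct of suspensions of $\mathcal{A}$, which is itself a semi-free resolution of filtration length $0$. For the step, write $\overline{\mathrm{add}(\mathcal{A})}^{\star m+1} = \overline{\mathrm{add}(\mathcal{A})}^{\star m} \star \overline{\mathrm{add}(\mathcal{A})}$, choose a triangle $L \to M \to K \to \Sigma L$ with $L \in \overline{\mathrm{add}(\mathcal{A})}^{\star m}$ and $K \in \overline{\mathrm{add}(\mathcal{A})}$, and by induction pick a semi-free resolution $F_L$ of $L$ of filtration length $m-1$. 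Because $K$ is isomorphic in $\mathscr{D}(\mathcal{A})$ to a coproduct $F_K$ of suspensions of $\mathcal{A}$ which is semi-free (hence semi-projective), I can lift the connecting map $\Sigma^{-1}K \to L$ to an actual DG morphism $\Sigma^{-1}F_K \to F_L$ and let $F$ be its mapping cone. Then $F \simeq M$, and the filtration on $F_L$ extended by $F_K$ as top quotient gives a strictly increasing semi-free filtration of $F$ of length $m$. Combining this with the ``only if'' direction, the hypotheses $M \in \overline{\mathrm{add}(\mathcal{A})}^{\star n+1}$ and $M \notin \overline{\mathrm{add}(\mathcal{A})}^{\star n}$ force $\mathrm{cl}_{\mathcal{A}}M = n$.

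The main obstacle is the inductive step of the ``if'' direction: one must lift an abstract distinguished triangle in $\mathscr{D}(\mathcal{A})$ to a genuine short exact sequence of semi-free DG modules so that the resulting filtration is honestly \emph{strictly} increasing (not just increasing) of the expected length. This uses the semi-projectivity of semi-free DG modules to lift the connecting morphism, together with the fact that the cone construction preserves the semi-free structure and allows one to extend the filtration of $F_L$ by placing $F_K$ on top as a new, nontrivial, semi-free quotient. Verifying strictness, and that no spurious shortening of the filtration can occur, is where the argument requires the most care.
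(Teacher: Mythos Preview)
Your proposal is correct and follows essentially the same route as the paper: both reduce the statement to the equivalence ``$M\in\overline{\mathrm{add}(\mathcal{A})}^{\,\star n+1}\Leftrightarrow \mathrm{cl}_{\mathcal{A}}M\le n$'', prove the forward direction by reading a semi-free filtration as iterated extensions with subquotients in $\overline{\mathrm{add}(\mathcal{A})}$, and prove the converse by induction, lifting the relevant map to a DG morphism between semi-free models (using semi-projectivity of a coproduct of shifts of $\mathcal{A}$) and extending the inductive filtration by the free piece sitting on top of the cone. The only cosmetic difference is the labeling of the triangle (you place the inductive object on the left and the free object on the right, whereas the paper does the reverse after a rotation), which has no effect on the argument.
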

\begin{proof}
We only need to show that $M\in \overline{\mathrm{add}(\mathcal{A})}^{\,\star n+1}$ if and only if
$\mathrm{cl}_{\mathcal{A}}M \le n$.
If $\mathrm{cl}_{\mathcal{A}}M =l \le n$, then $M$ admits a semi-free resolution
$F$, which has a strictly increasing semi-free filtration
$$0=F(-1)\subset F(0)\subset\cdots\subset F(i)\subset
F(i+1)\subset \cdots \subset F(l) = F$$ of length $l$. This yields a
sequence of short exact sequences
$$0\to F(i-1)\to F(i) \to F(i)/F(i-1)\to 0,\,  1\le i\le l.$$
Since $F(0)$ and $F(i)/F(i-1), \,i\ge 1$ are in
$\overline{\mathrm{add}(\mathcal{A})}$, induction shows that $F$ is in
$\overline{\mathrm{add}(\mathcal{A})}^{\,\star l+1}$. Hence $M$ is an object
in $\overline{\mathrm{add}(\mathcal{A})}^{\,\star n+1}$.

Conversely,  let $M$ be an object in
$\overline{\mathrm{add}(\mathcal{A})}^{\,\star n+1}$ for some $n\ge 0$. We
will prove $\mathrm{cl}_{\mathcal{A}}M \le n$ by induction on $n$. For $n=0$,
the assertion is evident. For $n\ge 1$, there exists an exact
triangle
$$ L\stackrel{\varepsilon}{\to} N\to M \to \Sigma L,$$
where $L$ and $N$ are in $\overline{\mathrm{add}(\mathcal{A})}$ and
$\overline{\mathrm{add}(\mathcal{A})}^{\,\star n}$ respectively. Since $L$ is
in $\overline{\mathrm{add}(\mathcal{A})}$, the DG morphism $\varepsilon$ can
be represented by a DG morphism $\varepsilon': F_L \to F_N$, where
$F_L$ is a direct sum of shifted copies of $\mathcal{A}$ and $F_N$ is a
semi-free resolution of $N$ with a strictly increasing semi-free
filtration
$$0=F_N(-1)\subset F_N(0)\subset \cdots\subset
F_N(i)\subset \cdots \subset F_N(n)=F_N$$ of length $n$. Clearly,
$M\cong \mathrm{Cone}(\varepsilon')$ in $\mathrm{D}(\mathcal{A})$. We have the
following cone exact sequence
$$ 0\to F_N\stackrel{\phi}{\to} \mathrm{Cone}(\varepsilon')
\to \Sigma F_L\to 0.$$ Since $\phi$ is an injective DG morphism,
$$0\subset \phi(F_N(0)) \subset \cdots \subset \phi(F_N(i))\subset
\cdots \subset \phi(F_N(n))\subset \mathrm{Cone}(\varepsilon')$$ is
a strictly semi-free filtration of $\mathrm{Cone}(\varepsilon')$ of
length $n$. Since $\mathrm{Cone}(\varepsilon') \cong M$ in
$\mathrm{D}(\mathcal{A})$, we have $\mathrm{cl}_{\mathcal{A}}M \le n$.
\end{proof}
\begin{rem}\label{ghsmcl}
For any $n\in \Bbb{N}$, we have $\overline{\mathrm{add}(\mathcal{A})}^{\,\star n}\subset \langle\overline{\mathcal{A}}\rangle_{n}$.  So Lemma \ref{ghlen} and Lemma \ref{charclen} imply $\mathrm{gh.len}_{\mathcal{A}}M\le \mathrm{cl}_{\mathcal{A}}M $. Then
\begin{align*}
l.\mathrm{gh.dim}\,\mathcal{A} &= \sup\{\mathrm{gh.len}_{\mathcal{A}}M | M\in \mathscr{D}^c(\mathcal{A})\}                     \\
                               &\le \sup\{\mathrm{cl}_{\mathcal{A}}M  | M\in \mathscr{D}^c(\mathcal{A})\}                      \\
                               &\le \sup\{\mathrm{cl}_{\mathcal{A}}M  | M\in \mathscr{D}(\mathcal{A})\} = l.\mathrm{Gl.dim}\mathcal{A}.
\end{align*}

\end{rem}

\begin{lem}\label{rouqdim}
Let $\mathcal{A}$ be a connected cochain DG algebra such that $H(\mathcal{A})$ is a left Noetherian graded algebra. Then
$$\mathrm{Rouq.dim}\,\mathcal{A}\le \mathrm{gl.dim}H(\mathcal{A})+1.$$
\end{lem}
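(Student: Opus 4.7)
The plan is to bound, for every compact DG $\mathcal{A}$-module $M$, the level $\mathrm{level}_{\mathscr{D}(\mathcal{A})}^{\mathcal{A}}(M)$ by $\mathrm{gl.dim}H(\mathcal{A})+1$, by running a finite-generation-sensitive version of the Eilenberg--Moore resolution argument already invoked in Remark \ref{clsmaller}. After reducing to the case $d:=\mathrm{gl.dim}H(\mathcal{A})<\infty$ (the other case being trivial), I would fix $M\in \mathscr{D}^c(\mathcal{A})$ and first exploit that $M$ admits a minimal semi-free resolution with a \emph{finite} semi-basis, by \cite[Proposition 3.3]{MW1}; combining the induced semi-free filtration with the left Noetherian hypothesis on $H(\mathcal{A})$ (via the usual filtration spectral sequence) then shows that $H(M)$ is a finitely generated graded $H(\mathcal{A})$-module.

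Next I would choose a minimal graded free resolution
$$0\to P_d\to P_{d-1}\to \cdots\to P_0\to H(M)\to 0$$
in which every $P_i$ is a finitely generated free $H(\mathcal{A})$-module; this is possible because $H(\mathcal{A})$ is connected graded Noetherian of global dimension $d$. Lifting this resolution through the Eilenberg--Moore construction should produce a semi-free resolution $G\stackrel{\simeq}{\to}M$ carrying a strictly increasing semi-free filtration
$$0=G(-1)\subset G(0)\subset G(1)\subset\cdots\subset G(d)=G$$
whose successive quotients $G(i)/G(i-1)$ are finite direct sums of shifted copies of $\mathcal{A}$, and therefore lie in $\mathrm{add}(\mathcal{A})\subset \langle\mathcal{A}\rangle_1$.

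To finish, I would run an induction on $i$ using the exact triangles
$$G(i-1)\to G(i)\to G(i)/G(i-1)\to \Sigma G(i-1)$$
and the associativity of $\diamond$, obtaining $G(i)\in \langle\mathcal{A}\rangle_{i+1}$ at each stage. This yields $M\cong G\in \langle\mathcal{A}\rangle_{d+1}$ and hence $\mathrm{level}_{\mathscr{D}(\mathcal{A})}^{\mathcal{A}}(M)\le d+1$; taking the supremum over $M\in \mathscr{D}^c(\mathcal{A})$ gives $\mathrm{Rouq.dim}\,\mathcal{A}\le \mathrm{gl.dim}H(\mathcal{A})+1$.

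The main obstacle I anticipate is the finite-generation bookkeeping in the middle step. Remark \ref{clsmaller} already gives $\mathrm{cl}_{\mathcal{A}}M\le \mathrm{pd}_{H(\mathcal{A})}H(M)$ via Eilenberg--Moore, but in conjunction with Lemma \ref{charclen} this only places $M$ in $\overline{\mathrm{add}(\mathcal{A})}^{\,\star d+1}$, whereas $\langle\mathcal{A}\rangle_{d+1}$ is built from the smaller category $\mathrm{add}(\mathcal{A})$ of \emph{finite} direct sums. I have to check carefully that the Noetherian hypothesis on $H(\mathcal{A})$ really lets me lift a finite-rank free resolution of $H(M)$ to a semi-free resolution of $M$ whose filtration quotients are finite direct sums of shifts of $\mathcal{A}$; this is the sharpening of Remark \ref{clsmaller} that makes the Rouquier-dimension bound go through.
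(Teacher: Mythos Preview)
Your proposal is correct and follows essentially the same route as the paper's proof: reduce to $d:=\mathrm{gl.dim}H(\mathcal{A})<\infty$, use compactness of $M$ and the Noetherian hypothesis to get a finitely generated minimal free resolution of $H(M)$, lift it via the Eilenberg--Moore construction to a semi-free resolution whose filtration quotients lie in $\mathrm{add}(\mathcal{A})$, and induct along the filtration to place $M$ in $\langle\mathcal{A}\rangle_{d+1}$. The only cosmetic difference is that the paper sets $d=\mathrm{pd}_{H(\mathcal{A})}H(M)$ for each $M$ rather than the uniform $d=\mathrm{gl.dim}H(\mathcal{A})$, but this does not affect the argument or the conclusion.
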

\begin{proof}
If $\mathrm{gl.dim}H(\mathcal{A})=+\infty$, then the inequality holds obviously. We only need to consider the case that $\mathrm{gl.dim}H(\mathcal{A})<+\infty$. For any $M\in \mathscr{D}^c(\mathcal{A})$, we have $\mathrm{pd}_{H(\mathcal{A})}H(M)\le \mathrm{gl.dim}H(\mathcal{A})<+\infty$. Set $d=\mathrm{pd}_{H(\mathcal{A})}H(M)$.   The left graded $H(\mathcal{A})$-module $H(M)$ is finitely generated since $M\in  \mathscr{D}^c(\mathcal{A})$.
Thus $H(M)$ admits a finitely generated minimal free resolution
\begin{align}\label{minfree}
\cdots 0\to H(\mathcal{A})\otimes
V_d\stackrel{\partial_d}{\to} \cdots \stackrel{\partial_2}{\to}
H(\mathcal{A})\otimes V_1 \stackrel{\partial_1}{\to} H(\mathcal{A})\otimes
V_0\stackrel{\varepsilon}{\to} H(M)\to 0,
\end{align}
where each $V_i$ is a finite-dimensional vector space.  By \cite[Proposition 20.11 ]{FHT}, the resolution (\ref{minfree}) induces a semi-free resolution $F$, which is called Eilenberg-Moore resolution, of $M$. From the proof of \cite[Proposition 20.11 ]{FHT}, one sees that $$F^{\#}=\bigoplus_{i=0}^d\mathcal{A}^{\#}\otimes \Sigma^iV_i,$$ and $F$ admits a semi-free filtration
$$0=F(-1)\subset F(0)\subset F(1)\subset \cdots \subset F(d-1)\subset F(d)=F, $$
where each $F(i)/F(i-1)=\mathcal{A}\otimes \Sigma^iV_i$, $i=1,2,\cdots, d$.
The semi-free filtration above yields a
sequence of short exact sequences
$$0\to F(i-1)\to F(i) \to F(i)/F(i-1)\to 0,\,  1\le i\le d.$$
Since  each $F(i)/F(i-1)=\mathcal{A}\otimes V_i \in \mathrm{add}(\mathcal{A})$, we prove $F\in \mathrm{add}(\mathcal{A})^{\,\star (d+1)}$ by induction.
Since $\mathrm{add}(\mathcal{A})^{\,\star (d+1)}\subset
\langle \mathcal{A}\rangle_{d+1}$, we show that $M\in
\langle \mathcal{A}\rangle_{d+1}$ and then $\mathscr{D}^c(\mathcal{A})\subset \langle \mathcal{A}\rangle_{d+1}$. Therefore, $\mathrm{Rouq.dim}\,\mathcal{A}\le \mathrm{gl.dim}H(\mathcal{A})+1$.

\end{proof}

\begin{lem}\label{restrict}\cite[Corollary 3.13]{Rou}
Let $\mathcal{E}$ be a subcategory or just a set of some objects
of $\mathscr{D}^c(\mathcal{A})$. Then $\mathscr{D}^c(\mathcal{A})\cap \overline{\langle
\mathcal{E} \rangle}_{i} = \langle \mathcal{E} \rangle_{i},i\ge 0$.
\end{lem}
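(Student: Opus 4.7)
The plan is to establish the two inclusions separately. The easy one, $\langle \mathcal{E}\rangle_i\subseteq \mathscr{D}^c(\mathcal{A})\cap \overline{\langle \mathcal{E}\rangle}_i$, I would handle by induction on $i$. Since $\mathrm{add}(\mathcal{E})\subseteq \overline{\mathrm{add}}(\mathcal{E})$ and $\mathrm{smd}$ is monotone, $\langle \mathcal{E}\rangle_1\subseteq \overline{\langle \mathcal{E}\rangle}_1$; the diamond operation is monotone in each slot, so the inclusion propagates. For membership in $\mathscr{D}^c(\mathcal{A})$ one only needs that compact objects are closed under finite direct sums, shifts, summands, and extensions in triangles, all of which are standard.

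For the nontrivial inclusion $\mathscr{D}^c(\mathcal{A})\cap \overline{\langle \mathcal{E}\rangle}_i\subseteq \langle \mathcal{E}\rangle_i$, I would proceed by induction on $i$, the crucial input being the defining property that $\mathrm{Hom}_{\mathscr{D}(\mathcal{A})}(M,-)$ commutes with arbitrary coproducts for $M\in \mathscr{D}^c(\mathcal{A})$. For $i=1$, a compact $M\in \overline{\langle \mathcal{E}\rangle}_1=\mathrm{smd}(\overline{\mathrm{add}}(\mathcal{E}))$ is a direct summand of some $X=\bigoplus_{\lambda\in \Lambda}\Sigma^{n_\lambda}E_\lambda$ with $E_\lambda\in \mathcal{E}$. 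The idempotent $e\colon X\to X$ with image $M$ gives two maps $M\hookrightarrow X\to M$ whose composition is $\mathrm{id}_M$. Using compactness of $M$, the inclusion $M\to X$ factors through a finite subsum $X_0\subseteq X$, and then the composite $M\to X_0\to M$ exhibits $M$ as a summand of $X_0\in \mathrm{add}(\mathcal{E})$, so $M\in \langle \mathcal{E}\rangle_1$.

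For the inductive step, let $M\in \mathscr{D}^c(\mathcal{A})\cap \overline{\langle \mathcal{E}\rangle}_i$. Then $M$ is a summand of some $M'$ fitting in a triangle $L\to M'\to N\to \Sigma L$ with $L\in \overline{\langle \mathcal{E}\rangle}_{i-1}$ and $N\in \overline{\langle \mathcal{E}\rangle}_1$. The delicate point is that $L$ and $N$ are not a priori compact, so the induction hypothesis does not apply directly to them; I would reduce to the compact case by a ``pushout along a finite piece'' argument. The map $M\to \Sigma L$ factors, by the same compactness argument as in the base case, through a finite direct sum of compact objects inside $\overline{\langle \mathcal{E}\rangle}_{i-1}$, which by the base case lies in $\langle \mathcal{E}\rangle_{i-1}$; completing this composite to an octahedron produces a new triangle exhibiting $M$ (up to a compact summand) as built from $\langle \mathcal{E}\rangle_1$ and $\langle \mathcal{E}\rangle_{i-1}$, so $M\in \langle \mathcal{E}\rangle_1\diamond \langle \mathcal{E}\rangle_{i-1}=\langle \mathcal{E}\rangle_i$.

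The main obstacle will be the last step: turning the abstract ``summand of a $\overline{\langle\mathcal{E}\rangle}_{i-1}$-by-$\overline{\langle\mathcal{E}\rangle}_1$ extension'' into an actual ${\langle\mathcal{E}\rangle}_{i-1}$-by-${\langle\mathcal{E}\rangle}_1$ extension through a finite piece. The point that has to be pushed through is that although a connecting map $M\to \Sigma L$ with $L$ a summand of an arbitrary coproduct need not itself land in a finite subsum, composing with the projection onto $M$'s compact summand and invoking the compact-to-coproduct factorization gives a usable finite approximation. This is precisely the content packaged in \cite[Corollary 3.13]{Rou}, which I would cite once the bookkeeping becomes cumbersome rather than reproduce every diagram.
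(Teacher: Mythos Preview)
The paper offers no proof of this lemma at all: it is recorded purely as a citation of \cite[Corollary~3.13]{Rou}. So there is nothing in the paper to compare your argument against; what you have written is an attempt to reproduce Rouquier's own proof rather than an alternative to the paper's.

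Your base case $i=1$ is correct and is exactly the standard compactness-factoring argument. In the inductive step, however, there is a slip: from the triangle $L\to M'\to N\to \Sigma L$ with $M$ a summand of $M'$, there is no natural map $M\to \Sigma L$ to which you can apply compactness (the composite $M\to M'\to N\to \Sigma L$ is zero). What you actually have is a map $M\to M'\to N$ with $N\in\overline{\langle\mathcal{E}\rangle}_1$; one then uses the formula $\overline{\langle\mathcal{E}\rangle}_i=\mathrm{smd}\bigl(\overline{\langle\mathcal{E}\rangle}_1\star\cdots\star\overline{\langle\mathcal{E}\rangle}_1\bigr)$ to replace $N$ by an honest coproduct, factor $M\to N$ through a finite subsum $N_0\in\langle\mathcal{E}\rangle_1$, and then complete $M\to N_0$ to a triangle whose third term one shows (by an octahedral/cofibre argument) lies in $\mathscr{D}^c(\mathcal{A})\cap\overline{\langle\mathcal{E}\rangle}_{i-1}$, to which the induction hypothesis applies. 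You gesture at this but with the wrong map, and you yourself concede that the bookkeeping is where you would fall back on the citation. Since the paper does exactly that---cite Rouquier without argument---your proposal is already more than the paper provides; just fix the target of the map you factor and the sketch is sound.
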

\begin{lem}\label{ghlenlevel}
For any $M\in \mathscr{D}^c(\mathcal{A})$, we have $\mathrm{gh.len}_{\mathcal{A}}M=\mathrm{level}_{\mathscr{D}(\mathcal{A})}^{\mathcal{A}}(M)-1$. And hence
$l.\mathrm{gh.dim}\,\mathcal{A}+1=\mathrm{Rouq.dim}\,\mathcal{A}$.
\end{lem}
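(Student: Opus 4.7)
The plan is to combine the one-sided inequality already recorded in Remark \ref{ghlevel} with the characterization of ghost length in Lemma \ref{ghlen} and Rouquier's restriction lemma (Lemma \ref{restrict}). Remark \ref{ghlevel} gives $\mathrm{gh.len}_{\mathcal{A}}M + 1 \le \mathrm{level}_{\mathscr{D}(\mathcal{A})}^{\mathcal{A}}(M)$ with no compactness hypothesis on $M$, so only the reverse inequality requires an argument, and that is where the compactness of $M$ enters.

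For the reverse inequality, set $n = \mathrm{gh.len}_{\mathcal{A}}M + 1$. By Lemma \ref{ghlen}, the equality $\mathrm{gh.len}_{\mathcal{A}}M = n-1$ is equivalent to the membership $M \in \overline{\langle \mathcal{A}\rangle}_n$, i.e., membership in the $n$-th tower built using arbitrary coproducts. Since $M$ is compact, I would then apply Lemma \ref{restrict} with $\mathcal{E} = \{\mathcal{A}\}$ to obtain
\[
M \in \mathscr{D}^c(\mathcal{A}) \cap \overline{\langle \mathcal{A}\rangle}_n \;=\; \langle \mathcal{A}\rangle_n,
\]
which is precisely the statement that $\mathrm{level}_{\mathscr{D}(\mathcal{A})}^{\mathcal{A}}(M) \le n = \mathrm{gh.len}_{\mathcal{A}}M + 1$. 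Combined with Remark \ref{ghlevel} this yields the claimed equality.

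The consequence for dimensions then falls out immediately by taking suprema over all $M \in \mathscr{D}^c(\mathcal{A})$: the supremum of $\mathrm{gh.len}_{\mathcal{A}}M + 1$ is $l.\mathrm{gh.dim}\,\mathcal{A}+1$ by definition, while the supremum of $\mathrm{level}_{\mathscr{D}(\mathcal{A})}^{\mathcal{A}}(M)$ over the same family is $\mathrm{Rouq.dim}\,\mathcal{A}$. The only subtle point in the argument is the careful bookkeeping between the two flavours of the tower $\langle \mathcal{A}\rangle_n$ (with and without infinite direct sums), since Lemma \ref{ghlen} naturally lives in the coproduct-closed version whereas the level is defined in the finite version; Lemma \ref{restrict} is exactly the bridge designed for this purpose, so once the two notations are reconciled the proof is essentially a one-line application of Rouquier's restriction result.
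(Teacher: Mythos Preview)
Your proof is correct and follows essentially the same route as the paper: the paper also combines the characterization of ghost length from Lemma~\ref{ghlen} (packaged as Remark~\ref{charghlen}) with Rouquier's restriction Lemma~\ref{restrict} to pass from $\overline{\langle \mathcal{A}\rangle}_n$ to $\langle \mathcal{A}\rangle_n$, obtaining both inequalities in a single chain of equalities rather than splitting off one direction via Remark~\ref{ghlevel}. One small wording issue: Lemma~\ref{ghlen} gives ``$M\in\overline{\langle \mathcal{A}\rangle}_n \iff \mathrm{gh.len}_{\mathcal{A}}M\le n-1$'', not equality, but since you only use the forward implication with $n=\mathrm{gh.len}_{\mathcal{A}}M+1$ this does not affect the argument.
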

\begin{proof}
We have
\begin{align*}
\mathrm{gh.len}_{\mathcal{A}}M&\stackrel{(a)}{=}\inf\{n-1|M\in \langle\overline{\mathcal{A}}\rangle_{n}\}\\
                 &=\inf\{n-1|M\in  \mathscr{D}^c(\mathcal{A})\cap\langle\overline{\mathcal{A}}\rangle_{n}\} \\
                  &\stackrel{(b)}{=}\inf\{n-1|M\in \langle \mathcal{A} \rangle_{n} \} \\
                                                         &=\inf\{n|M\in \langle \mathcal{A} \rangle_{n} \} -1 \\
                                                         &=\mathrm{level}_{\mathscr{D}(\mathcal{A})}^{\mathcal{A}}(M)-1,
\end{align*}
where $(a)$ and $(b)$ are obtained by Remark \ref{charghlen} and Lemma \ref{restrict}, respectively. Then
\begin{align*}
\mathrm{Rouq.dim}\,\mathcal{A}&= \sup\{\mathrm{level}_{\mathscr{D}(\mathcal{A})}^{\mathcal{A}}(M)| M\in \mathscr{D}^c(\mathcal{A})\}             \\
                              &= \sup\{\mathrm{gh.len}_{\mathcal{A}}M+1| M\in \mathscr{D}^c(\mathcal{A})\}\\
                              &= \sup\{\mathrm{gh.len}_{\mathcal{A}}M| M\in \mathscr{D}^c(\mathcal{A})\}+1 \\
                              &= l.\mathrm{gh.dim}\,\mathcal{A}+1.
\end{align*}

\end{proof}

\section{ghost dimension and rouquier dimension}
In this section, we determine the ghost dimension and the Rouquier dimension of DG polynomial algebras. To achieve this goal, we need the following lemma.
\begin{lem}\cite[Theorem $5.1$]{ABIM}\label{keylem}
Let $\mathcal{A}$ be a DG algebra with zero differential and $U$ a DG $\mathcal{A}$-module. If the ring $\mathcal{A}^{\#}$ is a commutative, Noetherian algebra over a field,  then $\mathrm{level}_{\mathscr{D}(\mathcal{A})}^{\mathcal{A}}(U)\ge \mathrm{height}I+1$, where $I$ is the annihilator of the $\mathcal{A}^{\#}$-module $H(U)$.
\end{lem}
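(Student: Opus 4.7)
The plan is to translate the level lower bound into exhibiting a long non-vanishing composition of ghost morphisms out of $U$, and then to construct such a composition from elements of the annihilator. Since $\partial_{\mathcal{A}}=0$ and $\mathcal{A}^{\#}$ is commutative, every element $a\in \mathcal{A}^{\#}$ is a cocycle and gives a chain map $\mu_a: U\to \Sigma^{|a|}U$. If moreover $a\in I$, then $a\cdot H(U)=0$ forces $H(\mu_a)=0$, so $\mu_a$ is a ghost in $\mathscr{D}(\mathcal{A})$. If one can find $h=\mathrm{height}I$ elements $a_1,\ldots,a_h\in I$ whose iterated composite $\mu_{a_h}\circ\cdots\circ\mu_{a_1}$ is non-zero in $\mathscr{D}(\mathcal{A})$, then Remark \ref{ghlevel} yields $\mathrm{level}_{\mathscr{D}(\mathcal{A})}^{\mathcal{A}}(U)\ge \mathrm{gh.len}_{\mathcal{A}}U+1\ge h+1$.

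To produce such elements I would first reduce to a local situation. Pick a minimal prime $\mathfrak{p}$ of $I$ with $\mathrm{height}\,\mathfrak{p}=h$, available because $\mathcal{A}^{\#}$ is Noetherian. Level can only decrease under localization, so it suffices to establish the bound after inverting elements outside $\mathfrak{p}$: the localized $\mathcal{A}^{\#}_{\mathfrak{p}}$ is local of Krull dimension $h$, and $I_{\mathfrak{p}}$ is primary to its maximal ideal. Choose a system of parameters $a_1,\ldots,a_h$ lying in a sufficiently high power of $I_{\mathfrak{p}}$, so that each $a_i$ still annihilates $H(U_{\mathfrak{p}})$ and hence gives a ghost on $U_{\mathfrak{p}}$.

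The main obstacle is showing that the composite $\mu_{a_h}\circ\cdots\circ\mu_{a_1}$ is non-zero in $\mathscr{D}(\mathcal{A}_{\mathfrak{p}})$. This cannot be checked on cohomology, since the composite is itself a ghost, so a finer mechanism is required. The standard route is via Koszul objects: compute the level of the Koszul complex $K=K(a_1,\ldots,a_h)$ directly (its level is exactly $h+1$, the upper bound coming from the Koszul filtration and the lower bound from non-vanishing of top Koszul cohomology), and then compare $U_{\mathfrak{p}}$ with $K$ to transfer the lower bound. This comparison typically goes through either a big Cohen-Macaulay module for the sequence $a_1,\ldots,a_h$ or an Intersection-Theorem type input, both of which witness the non-triviality of the iterated multiplication. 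This commutative-algebra step is the technical core of \cite[Theorem 5.1]{ABIM} and is the principal difficulty of the plan.

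Once the non-vanishing is secured, the ghost--level inequality gives $\mathrm{level}_{\mathscr{D}(\mathcal{A}_{\mathfrak{p}})}^{\mathcal{A}_{\mathfrak{p}}}(U_{\mathfrak{p}})\ge h+1$, and localization monotonicity of level transports the bound back to $\mathcal{A}$, yielding $\mathrm{level}_{\mathscr{D}(\mathcal{A})}^{\mathcal{A}}(U)\ge \mathrm{height}I+1$ as claimed.
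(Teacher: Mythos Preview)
The paper does not give its own proof of this lemma: it is stated as a direct citation of \cite[Theorem~5.1]{ABIM} and is used as a black box in the proof of Theorem~\ref{ghrodim}. So there is no in-paper argument to compare your proposal against.

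As for your sketch itself, it is a reasonable outline of the strategy behind the ABIM result, but it is not a self-contained proof. You correctly identify that multiplication by an annihilating element yields a ghost, and that one should localize at a minimal prime over $I$ to reduce to a local ring of dimension $h=\mathrm{height}\,I$. However, the step you flag as the ``main obstacle''---showing that a length-$h$ composite of such ghosts is non-zero in the derived category---is precisely the content of the cited theorem, and you resolve it only by pointing back to \cite{ABIM}. In other words, your proposal ultimately defers to the same external input the paper cites, so it does not furnish an independent argument. If you want a genuine proof, you would need to carry out the Koszul/Intersection-Theorem comparison explicitly (in ABIM this goes through the New Intersection Theorem and a careful analysis of support), rather than invoking it by name.
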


Applying Lemma \ref{keylem} and the listed lemmas in the previous section, we can prove the following theorem.
\begin{thm}\label{ghrodim}
Let $\mathcal{A}$ be a DG polynomial algebra in $\Omega(x_1,x_2,\cdots, x_n)$.  Then
$\mathrm{gh.len}_{\mathcal{A}}k=\mathrm{cl}_{\mathcal{A}}k =\mathrm{level}_{\mathscr{D}(\mathcal{A})}^{\mathcal{A}}(k)-1$  and $$l.\mathrm{gh.dim}(\mathcal{A})+1=\mathrm{Rouq.dim}\,\mathcal{A}=\begin{cases}
n+1,\quad\text{if}\,\,\,\, \partial_{\mathcal{A}}=0,\\
\frac{n(n-1)}{2}+1, \quad \text{if}\,\,\, \partial_{\mathcal{A}}\neq 0.
 \end{cases} $$
\end{thm}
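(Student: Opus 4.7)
The plan is to pin down $\mathrm{level}_{\mathscr{D}(\mathcal{A})}^{\mathcal{A}}(k)$ from both sides and then assemble the full statement using the identifications already established in the excerpt (Lemma \ref{ghlenlevel} for ghost/level, Theorem \ref{Gldim} for cone length, Lemma \ref{rouqdim} for the upper bound on Rouquier dimension).

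First I would check that $k \in \mathscr{D}^c(\mathcal{A})$. Since $H(\mathcal{A})$ is a Noetherian polynomial algebra of finite global dimension (Corollary \ref{impcor}) and $H(k)=k$ is finitely generated, the Eilenberg--Moore procedure spelled out in the proof of Lemma \ref{rouqdim} yields a semi-free resolution of $k$ with finite semi-basis, hence $k$ is compact. In particular $\mathrm{level}^{\mathcal{A}}(k) \le \mathrm{Rouq.dim}\,\mathcal{A}$.

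Next I would invoke formality to get the lower bound. Proposition \ref{isomor} together with Proposition \ref{cohomology} says $\mathcal{A}$ is formal (trivially when $\partial_{\mathcal{A}}=0$, and through $\mathcal{A}\cong \mathcal{A}(1,0,\ldots,0)$ when $\partial_{\mathcal{A}}\ne 0$). Formality provides a triangulated equivalence $\mathscr{D}(\mathcal{A})\simeq \mathscr{D}(H(\mathcal{A}))$ sending $\mathcal{A}\mapsto H(\mathcal{A})$ and $k\mapsto k$, so $\mathrm{level}_{\mathscr{D}(\mathcal{A})}^{\mathcal{A}}(k)=\mathrm{level}_{\mathscr{D}(H(\mathcal{A}))}^{H(\mathcal{A})}(k)$. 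Because $H(\mathcal{A})$ has zero differential and a commutative Noetherian underlying algebra, Lemma \ref{keylem} applies with $U=k$: the annihilator of $H(U)=k$ in $H(\mathcal{A})^{\#}$ is the graded maximal ideal, whose height equals the Krull dimension of the polynomial ring $H(\mathcal{A})$, which is $\mathrm{gl.dim}\,H(\mathcal{A})$ by Corollary \ref{impcor}. Thus $\mathrm{level}^{\mathcal{A}}(k)\ge \mathrm{gl.dim}\,H(\mathcal{A})+1$.

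Combining the two bounds with Lemma \ref{rouqdim} forces
\[
\mathrm{gl.dim}\,H(\mathcal{A})+1 \le \mathrm{level}^{\mathcal{A}}(k) \le \mathrm{Rouq.dim}\,\mathcal{A} \le \mathrm{gl.dim}\,H(\mathcal{A})+1,
\]
so all three quantities coincide. Subtracting one and applying Lemma \ref{ghlenlevel} gives $\mathrm{gh.len}_{\mathcal{A}}k = \mathrm{gl.dim}\,H(\mathcal{A})$, and Theorem \ref{Gldim} supplies $\mathrm{cl}_{\mathcal{A}}k=\mathrm{gl.dim}\,H(\mathcal{A})$; this yields the three-way equality $\mathrm{gh.len}_{\mathcal{A}}k=\mathrm{cl}_{\mathcal{A}}k=\mathrm{level}^{\mathcal{A}}(k)-1$. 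The identity $l.\mathrm{gh.dim}(\mathcal{A})+1=\mathrm{Rouq.dim}\,\mathcal{A}$ is the second half of Lemma \ref{ghlenlevel}, and the explicit numerical values in the two cases $\partial_{\mathcal{A}}=0$ and $\partial_{\mathcal{A}}\ne 0$ come directly from substituting $\mathrm{gl.dim}\,H(\mathcal{A})=n$ or $\frac{n(n-1)}{2}$ from Corollary \ref{impcor}.

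The main obstacle is the careful use of formality: Lemma \ref{keylem} requires zero differential, so we cannot feed $\mathcal{A}$ into it directly when $\partial_{\mathcal{A}}\ne 0$, and the argument hinges on the derived equivalence $\mathscr{D}(\mathcal{A})\simeq \mathscr{D}(H(\mathcal{A}))$ preserving levels of the trivial module. Once that transfer is justified, the rest is a bookkeeping exercise combining the inequalities already at hand.
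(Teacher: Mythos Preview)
Your proposal is correct and follows essentially the same route as the paper's proof: both use formality to transfer the level computation to $\mathscr{D}(H(\mathcal{A}))$, invoke Lemma~\ref{keylem} for the lower bound, Lemma~\ref{rouqdim} for the upper bound, and then read off the equalities via Lemma~\ref{ghlenlevel} and Theorem~\ref{Gldim}. The only differences are cosmetic: you treat the two cases $\partial_{\mathcal{A}}=0$ and $\partial_{\mathcal{A}}\neq 0$ uniformly through formality (the paper handles $\partial_{\mathcal{A}}=0$ separately since then $H(\mathcal{A})=\mathcal{A}$ trivially), and you make the compactness of $k$ explicit, which the paper leaves implicit.
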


\begin{proof}

By Theorem \ref{Gldim},
we have
\begin{align*}
l.\mathrm{Gl.dim}\mathcal{A}=\mathrm{cl}_{\mathcal{A}}\k=\mathrm{gl.dim}H(\mathcal{A})=\mathrm{pd}_{H(\mathcal{A})}k=\begin{cases}
n, \,\,\,\text{if}\,\,\, \partial_{\mathcal{A}}=0\\
\frac{n(n-1)}{2}, \,\,\text{if}\,\,\, \partial_{\mathcal{A}}\neq 0.
\end{cases}
\end{align*}
By Remark \ref{ghsmcl} and Lemma \ref{ghlenlevel}, we have $$\mathrm{gh.len}_{\mathcal{A}}k=\mathrm{level}_{\mathscr{D}(\mathcal{A})}^{\mathcal{A}}(k)-1\le \mathrm{cl}_{\mathcal{A}}k=\mathrm{pd}_{H(\mathcal{A})}k.$$

If $\partial_{\mathcal{A}}=0$, then $H(\mathcal{A})=\mathcal{A}$ and we have
$$\mathrm{level}_{\mathscr{D}(\mathcal{A})}^{\mathcal{A}}(k)=\mathrm{level}_{\mathscr{D}(H(\mathcal{A}))}^{H(\mathcal{A})}(k) \ge \mathrm{height}(\mathrm{Ann}_{H(\mathcal{A})}k)+1=n+1$$ by Lemma \ref{keylem}. Therefore, $\mathrm{gh.len}_{\mathcal{A}}k=\mathrm{level}_{\mathscr{D}(\mathcal{A})}^{\mathcal{A}}(k)-1=n$.

If $\partial_{\mathcal{A}}\neq 0$, then $\mathcal{A}\cong \mathcal{A}(1,0,\cdots, 0)$ by Proposition \ref{isomor}. By Proposition \ref{cohomology}, $\mathcal{A}(1,0,\cdots,0)$ has formal property and $H(\mathcal{A}(1,0,\cdots,0))$
 is the polynomial algebra  $$\k[\lceil x_2^2\rceil,\lceil x_2x_3\rceil, \cdots,\lceil x_2x_n\rceil, \lceil x_3^2\rceil, \cdots, \lceil x_3x_n\rceil, \cdots,  \lceil x_{n-1}^2\rceil, \lceil x_{n-1}x_n\rceil, \lceil x_n^2\rceil ].$$
Therefore, $\mathcal{A}$ is also formal and $$\mathrm{pd}_{H(\mathcal{A})}k=\mathrm{gl.dim}H(\mathcal{A})=\mathrm{height}(\mathrm{Ann}_{H(\mathcal{A})}k)=\frac{n(n-1)}{2}.$$
 By the definition of formality, $\mathcal{A}$ can be connected with the trivial DG algebra $(H(\mathcal{A}),0)$ by a zig-zag $\leftarrow \rightarrow\leftarrow \cdots\rightarrow $ of quasi-isomorphisms. This implies that $$\mathrm{level}_{\mathscr{D}(\mathcal{A})}^{\mathcal{A}}(k)=\mathrm{level}_{\mathscr{D}(H(\mathcal{A}))}^{H(\mathcal{A})}(k).$$
 On the other hand, we have
 $$\mathrm{level}_{\mathscr{D}(H(\mathcal{A}))}^{H(\mathcal{A})}(k)\ge \mathrm{height}(\mathrm{Ann}_{H(\mathcal{A})}k)+1=\frac{n(n-1)}{2}+1$$ by Lemma \ref{keylem}.
So we also have $\mathrm{gh.len}_{\mathcal{A}}k=\mathrm{level}_{\mathscr{D}(\mathcal{A})}^{\mathcal{A}}(k)-1=\mathrm{cl}_{\mathcal{A}}k=\mathrm{pd}_{H(\mathcal{A})}k$ if $\partial_{\mathcal{A}}\neq 0$.

In both cases, we have
\begin{align*}
\mathrm{Rouq.dim}\,\mathcal{A}&\stackrel{(c)}{=}l.\mathrm{gh.dim}(\mathcal{A})+1 \\
                              &=\sup\{\mathrm{gh.len}_{\mathcal{A}}M | M\in \mathscr{D}^c(\mathcal{A})\}+1\\
                              &\ge \mathrm{gh.len}_{\mathcal{A}}k +1 \\
                              &=\mathrm{pd}_{H(\mathcal{A})}k +1,
\end{align*}
where $(c)$ is obtained by Lemma \ref{ghlenlevel}. On the other hand,  $$\mathrm{Rouq.dim}\,\mathcal{A}\le  \mathrm{gl.dim}H(\mathcal{A})+1=\mathrm{pd}_{H(\mathcal{A})}k +1$$  by Lemma \ref{rouqdim}. Then \begin{align*}\mathrm{Rouq.dim}\,\mathcal{A} =l.\mathrm{gh.dim}(\mathcal{A})+1=\mathrm{pd}_{H(\mathcal{A})}k +1=\begin{cases}
n+1,\quad\text{if}\,\,\,\, \partial_{\mathcal{A}}=0,\\
\frac{n(n-1)}{2}+1, \quad \text{if}\,\,\, \partial_{\mathcal{A}}\neq 0.
\end{cases}
\end{align*}
\end{proof}

\section*{Acknowledgments}
 The first author is
supported by NSFC  (Grant No. 11871326) and the Innovation Program of Shanghai Municipal
Education Commission (Grant No. 12YZ031).
\def\refname{References}

\end{document}